\DeclarePairedDelimiter{\prt}{(}{)}
\newcommand \commentout[1] {}
\DeclareMathOperator*{\supp}{\operatorname{supp}}
\newcommand{\partialt}[1]{\dfrac{\partial#1}{\partial t}}
\DeclareMathAlphabet{\mathup}{OT1}{\familydefault}{m}{n}
\newcommand{\dx}[1]{\mathop{}\!\mathup{d} #1}
\newcommand{\curlyP}{\mathcal{P}}
\newcommand{\curlyB}{\mathcal{B}}
\newcommand{\hn}{{\hat{n}}}
\theoremstyle{plain}
\newtheorem{thm}{Theorem}[section]
\newtheorem{lemma}[thm]{Lemma}
\newtheorem{proposition}[thm]{Proposition}
\newtheorem{corollary}[thm]{Corollary}
\theoremstyle{remark}
\newtheorem{remark}[thm]{\bf Remark}
\newtheorem{definition}[thm]{\bf Definition}
\newtheorem{assumptions}[thm]{\bf Assumption}
\newcommand{\ie}{\emph{i.e.}\;}
\newcommand{\cf}{\emph{cf.}\;}
\newcommand{\sign}{\mathrm{sign}}
\newcommand{\R}{\mathbb{R}}
\begin{document}

\title[New Lipschitz estimates for nonlinear diffusion]{New Lipschitz estimates and long-time asymptotic behavior for porous medium and fast diffusion equations
}

\author{Noemi David$^*$}
\author{Filippo Santambrogio$^*$}

\address{$^{*}$ Institut Camille Jordan, Université Claude Bernard Lyon 1, 43 bd du 11 novembre 1918, 69622 Villeurbanne Cedex,
France. {Email addresses:} {ndavid@math.univ-lyon1.fr}, {santambrogio@math.univ-lyon1.fr}}

\maketitle
\begin{abstract}
We obtain new estimates for the solution of both the porous medium and the fast diffusion equations by studying the evolution of suitable Lipschitz norms. Our results include instantaneous regularization for all positive times, long-time decay rates of the norms which are sharp and independent of the initial support, and new convergence results to the Barenblatt profile. Moreover, we address nonlinear diffusion equations including quadratic or bounded potentials as well. In the slow diffusion case, our strategy requires exponents close enough to 1, while in the fast diffusion case, our results cover any exponent for which the problem is well-posed and mass-preserving in the whole space.
\end{abstract}

\vskip .3cm
\begin{flushleft}
    \noindent{\makebox[1in]\hrulefill}
\end{flushleft}
	2020 \textit{Mathematics Subject Classification.} 35K55; 35K65; 35B45; 35Q92; 
	\newline\textit{Keywords and phrases.} Porous medium equation; Fast diffusion; Lipschitz estimates; Long-time asymptotic behavior; self-similar Barenblatt solutions\\[-2.em]
\begin{flushright}
    \noindent{\makebox[1in]\hrulefill}
\end{flushright}
\vskip .3cm
 
\section{Introduction} 
The goal of this paper is to provide new estimates on the regularity of the solution $n(t,x)$ of the following nonlinear equation
\begin{equation}\label{eq: density}
   \partialt{n} = \nabla \cdot (n \nabla (p + V)),  \quad\text{in } (0,\infty)\times \Omega, \quad d\geq 2,
\end{equation}
endowed with initial condition $n(0,x)=n_0(x)\geq 0, n_0\in L^1(\Omega)$, where the domain is either the whole space $\Omega=\R^d$ or a convex bounded set, for which the problem is endowed with homogeneous Neumann boundary conditions. As constitutive law of the pressure we take the signed power law
\begin{equation*}
    p=P(n):=\sign(\gamma) n^\gamma,
\end{equation*}
while $V:\R^d\to\R$ is a potential whose regularity will be detailed later on.
Equation~\eqref{eq: density} is a well-known example of a partial differential equation including convective effects and nonlinear diffusion whose theory is nowadays well established. For $\gamma>0$ it is an equation of \textit{porous medium} type, while for $\gamma<0$ it is referred to as \textit{fast diffusion}. In the latter case, we only consider the usual range of exponents $-2/d <\gamma<0$. 
We are interested in proving decay estimates on the quantity
\begin{equation}\label{quantity}
 u(t):=  \max_x |p(t,x)|^b{|\nabla p(t,x) + \nabla V(x)|^2},
\end{equation}
where the coefficient $b \in \R$ will be chosen in an appropriate range. These estimates will provide regularity and long-time asymptotic results.
To this end, we exploit the fact that the evolution of the pressure is described by
\begin{equation}\label{eq: p}
    \partialt{p} =\gamma p \Delta q+ \nabla p \cdot \nabla q, \quad \text{ with } \, q:=p+V.
\end{equation}
Our analysis will focus on three cases: the trivial potential $V=0$, the quadratic potential $V=|x|^2/2$, and a generic potential with bounded derivatives.

\subsection{Historical remarks and recent results}
The porous medium (\textit{resp.} fast diffusion) equation, namely equation~\eqref{eq: density} with $V=0$, is the simplest example of a nonlinear diffusion equation; it can be written as
\begin{equation}\label{eq: pme}
    \partialt{n} = \frac{|\gamma|}{\gamma+1}\Delta n^{ \gamma+1},
\end{equation}
for $\gamma>0$ (\textit{resp.} $-2/d< \gamma<0$). The theory on these equations is nowadays well established, we refer the reader to the monographs \cite{V, V06} for an overview.
In this section, we recall some important properties of the solutions to these equations.

\smallskip
\noindent
\textbf{The porous medium equation.} It is well known that, unlike solutions of the heat equation, solutions of the porous medium equation (PME) can exhibit a free boundary. In fact, since the equation is degenerate rather than uniformly parabolic, the speed of propagation is finite  \cite{OKJ58}. Consequently, if the initial data are compactly supported, the solution remains so for all times, and we may distinguish two regions $\Omega(t):=\{x; \ n(x,t)>0\}$ and $\{x; \ n(x,t)=0\}$, separated by a free interface. For compactly supported initial data, even if continuous, the porous medium equation does not admit a classical solution, since the solution's gradient is discontinuous on the free boundary \cite{kalashnikov}.
However, in small enough neighborhoods of points in which $n(x, t)>0$, solutions are smooth and satisfy the equation in the classical sense. 
Let us now recall a fundamental estimate for the porous medium equation \cite{aronson70, AB79}.
\begin{lemma}[Aronson-B\'enilan estimate]
The pressure $p=n^\gamma$ satisfies
\begin{equation}
    \label{AB}
    \Delta p \geq - \frac{1}{\left(\gamma + \frac 2 d\right)t}, \quad \text{ for all } t>0.
\end{equation}
\end{lemma}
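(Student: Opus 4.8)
The plan is to run the classical Aronson--B\'enilan argument: derive a Riccati-type parabolic differential inequality for the scalar quantity $J:=\Delta p$, and then compare it with the spatially homogeneous explicit profile $-1/\big((\gamma+\tfrac2d)t\big)$, which is an exact solution of the associated equation and blows down to $-\infty$ at $t=0$.

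First I would write the pressure equation in the case $V=0$, which by \eqref{eq: p} reads $\partial_t p=\gamma p\,\Delta p+|\nabla p|^2$. Applying $\Delta$ to both sides and using the identities $\Delta(p\,\Delta p)=(\Delta p)^2+2\,\nabla p\cdot\nabla\Delta p+p\,\Delta^2 p$ and $\Delta(|\nabla p|^2)=2\,|D^2 p|^2+2\,\nabla p\cdot\nabla\Delta p$, one obtains
\[
\partial_t J=\gamma p\,\Delta J+(2\gamma+2)\,\nabla p\cdot\nabla J+\gamma J^2+2\,|D^2 p|^2 .
\]
The crucial pointwise algebraic input is the Cauchy--Schwarz bound on the eigenvalues of the Hessian, $|D^2 p|^2\ge \tfrac1d(\Delta p)^2=\tfrac1d J^2$. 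Inserting it gives the Riccati-type parabolic inequality
\[
\partial_t J\ \ge\ \gamma p\,\Delta J+(2\gamma+2)\,\nabla p\cdot\nabla J+\Big(\gamma+\tfrac2d\Big) J^2 .
\]
Now set $c:=\gamma+\tfrac2d$, which is positive in the PME range (and indeed for all $-2/d<\gamma$), and note that $z(t):=-1/(ct)$ is constant in $x$ and solves $\dot z=cz^2$, hence satisfies the operator on the right-hand side with equality. Writing $w:=J-z$, the difference obeys the \emph{linear} parabolic inequality $\partial_t w\ge \gamma p\,\Delta w+(2\gamma+2)\,\nabla p\cdot\nabla w+c\,(J+z)\,w$; since $z(t)\to-\infty$ as $t\to0^+$ while $J$ stays bounded below on the positivity set for $t$ bounded away from $0$, we have $w>0$ near $t=0$, and the maximum principle (applied on $[\varepsilon,T]$ for small $\varepsilon$ and then letting $\varepsilon\to0$, $T\to\infty$) yields $w\ge0$ for all $t>0$, i.e. the claimed bound $\Delta p\ge -1/(ct)$.

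The step that needs real care — and the only genuine obstacle — is the rigorous justification of these manipulations, since for compactly supported data $p=n^\gamma$ has a gradient jump across the free boundary, so $J=\Delta p$ is a priori only a measure and the coefficient $\gamma p$ in front of $\Delta J$ degenerates where $p=0$. I would therefore carry out the computation on the standard approximating family of solutions with smooth, strictly positive (and, on $\R^d$, suitably decaying) initial data, for which the solution is classical and smooth and the evolution equation for $J$ together with the parabolic comparison principle hold in the usual sense; the resulting bound $\Delta p\ge -1/(ct)$ is uniform in the approximation parameters and is preserved in the limit. Along the way one checks that on each time slab $[\varepsilon,T]$ the functions $p$ and the zeroth-order coefficient $c(J+z)$ are bounded (so that the maximum principle applies), and — in the bounded convex-domain case with homogeneous Neumann conditions — that the minimum of $w$ cannot be created at the boundary, where convexity of $\Omega$ provides the required sign of the relevant normal derivatives.
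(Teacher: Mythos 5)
The paper does not prove this lemma at all: it is recalled as a known result and attributed to \cite{aronson70, AB79}, so there is no internal proof to compare against. Your argument is precisely the classical Aronson--B\'enilan proof, and it is correct: the equation $\partial_t J=\gamma p\,\Delta J+(2\gamma+2)\,\nabla p\cdot\nabla J+\gamma J^2+2|D^2p|^2$ is computed correctly, the trace inequality $|D^2p|^2\ge J^2/d$ yields the Riccati inequality with constant $c=\gamma+\tfrac2d>0$, and comparison with the exact spatially homogeneous solution $z(t)=-1/(ct)$ gives the bound, with the passage to general (compactly supported) data by approximation correctly identified as the only delicate point. Two small technical remarks if you were to write this out in full: at a first touching point of $w=J-z$ the zeroth-order coefficient $c(J+z)$ has no sign, so one should run the maximum principle on $w+\delta e^{\lambda t}$ (or equivalently track $m(t)=\inf_x\Delta p(t,\cdot)$ and use the ODE comparison $m'\ge c\,m^2$ at interior minima, which avoids the comparison-function machinery altogether); and on $\R^d$ one must rule out the infimum escaping to spatial infinity, which is where the decay of the approximating solutions enters. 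Neither point is a gap in the idea, only in the level of detail.
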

\noindent
This lower bound on the Laplacian of the pressure is used in \cite{AB79} to prove that there exists a unique strong and continuous solution to the Cauchy problem with $L^1$-bounded initial data.

In \cite{CF80}, Caffarelli and Friedman prove that the solution $n(t,x)$ to the porous medium equation is actually H\"older continuous, uniformly in space and time. This result was further developed in \cite{CVW87} where the authors show that, after a certain \textit{waiting time}, the pressure is actually Lipschitz. It is indeed known that due to the finite speed of propagation, singularities may appear in finite time if the support of the initial data contains "holes". While in the one-dimensional case the pressure is Lipschitz for all times $t>0$, \cite{aronson70, kalashnikov}, in higher dimensions the pressure gradient blows up at the \textit{focusing time}, namely when the hole fully closes \cite{AGV98}. In particular, in \cite{CVW87} the authors prove that if $B_{R_0}$ is the smallest ball containing the support of $n_0$, and $t^*:=\inf\{t>0; \overline{B_{R_0}}\subset \Omega(t)\}$ is the focusing time, then $\partial_t p $ and $\nabla p$ are bounded for all $t>t^*$, and the bounds depend on $n_0$ and $t$.

Let us recall that the source solution of the porous medium equation is given by the following self-similar profile, usually referred to as the Barenblatt solution
\begin{equation}
    \label{Barenblatt density pme}
   \begin{aligned} 
   \curlyB_{\mathrm{PME}}(t,x)=t^{-\alpha d}F\prt*{{x}{t^{-\alpha}}}, \quad F(\xi)=\prt*{C - \alpha \frac{|\xi|^2}{2}}^{\frac 1 \gamma}_+, \quad \text{with }
 \alpha= \frac{1}{d \gamma +2},
    \end{aligned}
\end{equation}
where $C>0$ depends on $\gamma, d$, and the mass of the initial data, $M=\| n_0 \|_{1}$.
It is nowadays well established that for $t\to \infty$ the solution of the PME converges to the self-similar profile. The quest for explicit rates of convergence has attracted a lot of attention over the last few decades, see for instance \cite{V2003} and references therein. The solution satisfies the following convergence results
\begin{align*}
    \lim_{t \to \infty} \|n(t)- \curlyB_{\mathrm{PME}}(t)\|_{L^1(\R^d)}=0,\\[0.5em]
    \lim_{t \to \infty} t^{\alpha d} \|n(t)- \curlyB_{\mathrm{PME}}(t)\|_{L^\infty(\R^d)}=0,
\end{align*}
with $\alpha$ defined as in \eqref{Barenblatt density pme}. These rates are optimal for solutions with $L^1(\R^d)$ non-negative initial data.
A related question that has been addressed by a vast literature is whether these rates of convergence can be improved for a different class of initial data. Although it is not the purpose of this paper to review in detail these results, let us mention that entropy methods have been adopted to find better rates for $L^1$-initial data with finite second moment, see for instance \cite{V83, C01, CT00, Ot01}. Moreover, it has been shown that the Fisher information of equation \eqref{eq: density} with $V=|x|^2/2$, namely
\begin{equation*}
    \mathcal{I}(n)= \int_{\R^d} n \left|x + \nabla p\right|^2 \dx x,
\end{equation*} 
decays exponentially
 \begin{equation*}
     \mathcal{I}(n(t))\leq \mathcal{I}(n(t_0)) e^{-\lambda (t-t_0)}, \text{ for } t\geq t_0 >0,
 \end{equation*}
for some $\lambda>0$. This result also holds for the fast diffusion equation for a more restrictive range of exponents \cite{CT00, Blanchet2009}. Let us notice that the functional $\mathcal{I}(n)$ is the integral counterpart of the quantity \eqref{quantity} for $b= 1 /\gamma$, of which we study the asymptotic behavior, \cf Theorem~\ref{thm: attr V}.

Another interesting question is whether it is possible to obtain rates of convergence for the space derivative of the solution of equation \eqref{eq: pme}. Here the main challenge arises due to the presence of solutions with compact support which does not coincide with the one of the self-similar profile. In \cite{LV03}, Lee and V\'azquez show that after a certain time, the pressure is concave and converges to a truncated parabolic profile to all orders of differentiability. The authors assume the initial data to be compactly supported and satisfy a technical non-degeneracy condition.

\smallskip
\noindent
\textbf{Fast diffusion equation.}
Let us now recall some properties of solutions to equation~\eqref{eq: pme} for negative exponents, namely the standard fast diffusion equation (FDE). It is well known that, unlike the porous medium equation, the FDE admits classical solutions if the exponent satisfies $-2/d<\gamma<0$. In particular, for any $n_0\in L^1(\R^d)$ there exists a unique $C^\infty$ solution which is always strictly positive. If $\gamma$ is below the critical threshold $-2/d$, finite extinction phenomena arise and solutions may lose the mass-preservation property. Moreover, for $-2/d<\gamma<0$, the Aronson-B\'enilan estimate \eqref{AB} still holds. 

A self-similar solution with finite mass exists in the range $-2/d<\gamma<0$, and it exhibits so-called \textit{fat tails} for large values of $|x|$ 
\begin{equation*}
   \begin{aligned}
   \curlyB_{\mathrm{FDE}}(t,x)=t^{-\alpha d}F\prt*{{x}{t^{-\alpha}}}, \quad F(\xi)=\prt*{C + \alpha \frac{|\xi|^2}{2}}^{\frac 1 \gamma},\quad \text{with }
 \alpha= \frac{1}{d \gamma +2}.
    \end{aligned}
\end{equation*}
Let us point out that since the exponent belongs to the range $-2/d < \gamma < 0$, we still have $\alpha>0$. Therefore, while $\curlyB_{\mathrm{FDE}}^\gamma$ is now convex, the signed pressure $-\curlyB_{\mathrm{FDE}}^\gamma$ is again concave, though negative.

In the last decades, several results on the convergence of the solution of the FDE to the Barenblatt profile as $t\to \infty$ have been established under different assumptions on the initial data, see for instance \cite{CV2003, KMcC2006, Blanchet2009, V06, BV06}. In the very recent work \cite{BS22}, the authors find a necessary and sufficient condition on the initial data such that the solution convergences to the Barenblatt profile uniformly in \textit{relative error}, or \textit{weighted convergence}, namely
\begin{equation}\label{rel err}
    \lim_{t\to \infty} \left\|\frac{n(t)-\curlyB_{\mathrm{FDE}}(t)}{\curlyB_{\mathrm{FDE}}(t)}\right\|_{L^\infty(\R^d)} =0.
\end{equation}
To achieve this result, they first prove that if the initial data decays for large $|x|$ in a similar way as the source solution (see \eqref{space X} for the definition of this class of data), then the solution can always be bounded from below and above by two Barenblatt profiles of masses $\underline{M}$ and $\overline{M}$, \cf \cite[Theorem 1.1]{BS22}. In particular, under appropriate conditions on $n_0$, for any $t_0>0$, there exist $\underline{\tau},\bar{\tau}>0$, and $\underline{M},\overline{M}>0$ such that
\begin{equation}
    \label{cond: btween B}
    \curlyB_{\mathrm{FDE}}(t-\underline{\tau},x;\underline{M})\leq n(t,x) \leq  \curlyB_{\mathrm{FDE}}(t+\bar{\tau},x;\overline{M}), \quad \forall x\in \R^d, t\geq t_0.
\end{equation} 
A rate of convergence for the relative error has been recently found in \cite[Theorem 4.1]{BDNS22}.

Convergence rates of the solution of the fast diffusion to the Barenblatt profile were found in \cite{Blanchet2009} for any $C^k$-seminorm with $k\in \mathbb{N}$, under the assumption that the initial data is bounded from above and below by two Barenblatt profiles. To this end, the authors use convenient H\"older interpolation inequalities between $L^2(\R^d)$ and $C^{k+1}(\R^d)$. 

\smallskip
\noindent
\textbf{Our contribution compared to the existing results relying on the Bernstein technique.} 
Looking in the literature, it is possible to find some results that use similar techniques to the one we employ in this paper.

An interesting estimate for the flux $\nabla n^{\gamma+1}$ that holds uniformly in space was proved in B\'enilan's notes \cite{Benilannotes} where, for $n_0\in L^\infty(\R^d)$, the author shows
\begin{equation*}
  \big|\nabla n^{\gamma+1}\big|^2\leq \frac{n K_1}{t}, 
\qquad \;
K_1=\frac{2(\gamma+1)^2 \|n_0\|_\infty}{\gamma(1-\gamma^2(d-1))},
\end{equation*}
under the condition $\gamma^2(d-1)<1$. 
Our method recovers this estimate. Indeed, the above inequality can be rewritten in terms of the quantity $u(t)$ in \eqref{quantity} for $b=1/\gamma$, and gives
\begin{equation*}
u(t)= \max_x n \big|\nabla n^{\gamma}\big|^2\leq \frac{K_2}{t}, 
\qquad \;
K_2=\frac{2\gamma \|n_0\|_\infty}{1-\gamma^2(d-1)},
\end{equation*}
which is the same estimate we provide in Theorem~\ref{thm: V=0}. As we discuss later in the paper, the choice $b=1/\gamma$ is natural since it is the one which minimizes the coefficient $c_0$ appearing in \eqref{defic_0}, and making it as negative as possible. It is also the choice of $b$ which provides an estimate on $n|\nabla p|^2$, in analogy with the role played by the Fisher information. The method applied in \cite{Benilannotes} relies on a modified Bernstein technique, which is, in its essence, analogous to the strategy used in this paper to study \eqref{quantity}. 

The same strategy of \cite{Benilannotes} was also adapted in \cite{BGIL16} to provide estimates on the fast diffusion equation with critical zero-order absorption. In this work, the authors study the Lipschitz norm of the square root of the pressure, namely $n^{\gamma/2}$, which corresponds in our setting to the choice $b= -1$. 
This choice of exponent is natural in that, in the fast diffusion case, the optimal Lipschitz regularity is expected to be satisfied by $\sqrt{p}$ since the pressure behaves like $C(1+|x|^2)$. Once again, our strategy covers this case. 

In \cite{CJM15} the authors prove a priori gradient bounds for the solution itself, for a large family of nonlinear parabolic equations. This includes equation~\eqref{eq: pme} for $0\leq\gamma\leq 4/(d+3)$ and it is proven that the Lipschitz regularity of the solution $n(t,x)$ is preserved. Our main result also covers this case, since $\|\nabla n\|_\infty$ corresponds to the choice $b= 2/\gamma-2$ and the conditions for our theory to apply exactly require $0\leq\gamma\leq 4/(d+3)$. Moreover, we not only recover the same result as in \cite{CJM15} for the porous medium equation, but, when the inequality $0<\gamma< 4/(d+3)$ is strict, we show instantaneous regularization of the Lipschitz norm for solutions with $L^1$ initial data.

The main novelties of our paper are two. The first one concerns the choice of the exponent $b$. Indeed, to this day the existing literature has mainly been focusing only on particular choices of the exponent $b$ in \eqref{quantity}. On the contrary, we propose a comprehensive study where $b$ and $\gamma$ are considered as parameters and we look at the conditions that guarantee the decay of the corresponding quantities. 
Similar results for more general exponents have been obtained in \cite{HQZ17}, where the authors use probabilistic methods, based in particular on martingale integration, to establish gradient estimates on the solution of the porous medium and fast diffusion equations. The results in \cite{HQZ17} are probably the closest to our analysis, but the technique is different, and their results are in general expressed in terms of local quantities which makes it difficult to see their expression in terms of natural decay.

The second new aspect concerns the insertion of drift effects. So far, the Bernstein technique has not been applied to the convective-~(nonlinear)diffusion case, equation~\eqref{eq: density}. Yet, considering non-trivial convective effects, as discussed at the end of this section, allows us to obtain new insightful results on the convergence of the solution to the self-similar profile. As a consequence, the possibility to easily consider a drift should not be seen as a mere technical improvement, but as a core feature of the theory.

To summarize, our current contribution is to provide new results on the study of suitable Lipschitz bounds for solutions of nonlinear diffusion equations including a drift term, namely estimating the quantity defined in \eqref{quantity}. This strategy has the advantage of working both for the PME and the FDE in a unified way, and it is essentially independent of assumptions on the initial data. It yields new results on regularity (valid for $t>0$ and not only after some focusing time) and asymptotic behavior. However, for $\gamma>0$ our method only works for very small $\gamma$, namely when the diffusion is almost linear. Under mild assumptions on the potential $V$, we provide results on equation~\eqref{eq: density} at least for smooth and suitably decaying solutions, \cf Proposition~\ref{prop: main}. For the most standard cases $V=0$ and $V=|x|^2/2$, our results actually hold for general solutions and general initial data. In particular, for the standard equation \eqref{eq: pme} we consider as initial data any  $L^1$-non-negative function, \cf Theorems~\ref{thm: V=0} and \ref{thm: attr V}. 
The main novelty introduced in this paper is to consider equations that also include a drift term. In particular, in the quadratic case, we exploit the equivalence between the nonlinear Fokker-Plank equation and the standard equation (by means of the time-dependent scaling discussed in the following section) hence providing a new method to infer weighted convergence results of the pressure gradient to the self-similar profile, \cf Theorem~\ref{thm: convergence grad}. In the fast diffusion case, when \eqref{rel err} holds, such weight - which actually depends on $n$ - can be replaced by an explicit function of time and space, leading to a new convergence rate in the $C^1$-seminorm.

The estimates we present in this paper are mainly applied to the study of the asymptotic behavior or the instantaneous regularization of the solutions of the PME or FDE. Yet, one of the main interests of the present paper is, in our opinion, that it shows that, despite the huge literature existing on these equations, it is still possible to find new and simple estimates with quite elementary techniques.

\subsection{Preliminaries and assumptions.}\textbf{ }\\ 

\noindent
\textbf{Time-dependent scaling.} A fundamental remark that has been extensively used in the literature to study the properties of the standard PME and FDE consists in observing that solutions to equation~\eqref{eq: pme} can actually be seen as solutions of a nonlinear Fokker-Planck equation with quadratic potential $V=|x|^2/2$ through the following time-dependent change of variables
\begin{equation}\label{eq: change of v}
\begin{aligned}
    \hn (t,x) := \varphi(t)^d n (\psi(t), \varphi(t)x), \qquad
        \varphi(t)=e^t, \;
        \psi(t)=e^{(d\gamma+2)t}. 
\end{aligned}        
\end{equation}
In fact, if $n(t,x)$ is a solution of equation~\eqref{eq: pme}, $\hat n (t,x)$ satisfies
\begin{equation}\label{eq hat n}
    \partialt{\hn} = \frac{|\gamma|}{\gamma+1}\Delta \hn^{\gamma+1} + \nabla \cdot (\hn x).
\end{equation}
Unlike the drift-less case, equation~\eqref{eq hat n} has a unique compactly supported stationary state, which coincides with the Barenblatt profile evaluated at an appropriate time $t=t(d,\gamma)$. This property allows us to infer long-time behavior results on the porous medium/fast diffusion equation from the long-time behavior of solutions of this Fokker-Planck equation.

\smallskip
\noindent
\textbf{Tail behavior of the FDE solution.}
We now review some particular properties of the FDE that will be used throughout the paper.  
It is known that if $\gamma$ belongs to the range $-2/d<\gamma<0$, the solution exhibits polynomial tails. In \cite[Theorem 1.1]{BS22}, the authors give a necessary and sufficient condition for the solution to satisfy \eqref{cond: btween B}, namely to be bounded from below and above by two Barenblatt profiles, which is $n_0\in \mathcal{X}\setminus \{0\}$, where
\begin{equation}
    \label{space X}
    \mathcal{X} := \{u \in L^1(\R^d), u \geq 0, |u|_\mathcal{X} <\infty\}, \text{ with } |u|_\mathcal{X} := \sup_{R>0}R^{-\frac{2}{\gamma}-d} \int_{B_R^c}|u|\dx x < \infty.
\end{equation}
Moreover, as shown in the proof of \cite[Theorem~4]{Blanchet2009}, the gradient of the solution can be bounded uniformly by $|x|^{2/\gamma -1}$. 
We collect these properties as follows: for all $t_0$, there exist $k(t), K(t), c(t)\in L^{\infty}_{loc}(0,\infty)$ and $R>0$ such that $\forall |x|>R, t\geq t_0$
\begin{equation}\label{cond: tail behavior}
k(t) (1+|x|^2)^{1/\gamma} \le n(t,x)\leq K(t) (1+|x|^2)^{1/\gamma}, \quad |\nabla  n(t,x)|\leq c(t) |x|^{2/\gamma-1}.
\end{equation}
This tail behavior will be essential for us in order to rigorously justify the formal computations. 
However, this behavior is only known to hold in the drift-less case \eqref{eq: pme} and, through the change of variables \eqref{eq: change of v}, for equation~\eqref{eq hat n}. In order to provide a rigorous justification for the more general case, namely equation~\eqref{eq: density} for a generic potential $V$, we will impose \eqref{cond: tail behavior} as an assumption on the solution, see Definition~\ref{definition}. In the trivial and quadratic potential cases, this condition will be removed later on using approximation arguments.

\smallskip
\noindent
\textbf{$\boldsymbol{L^\infty}$-regularization of $\boldsymbol{n}$.}
An important consequence of the \textit{semi}-~\textit{subharmonicity} of the pressure given by the fundamental estimate \eqref{AB} is to provide a local bound on $\|n(t)\|_\infty$. In the porous medium case, this translates into a bound on $\|p(t)\|_\infty$, while for the fast diffusion, we infer a uniform (in space) lower bound.

\begin{lemma}\label{lemma: bound on n(t)}
    Let $n(t,x)$ be the solution of equation~\eqref{eq: pme} with $|\gamma|<1$. There exists a positive constant C such that the $L^\infty$-norm of $n(t)$ satisfies 
    \begin{equation}
        \label{eq: bound on n(t)}
        \|n(t)\|_\infty \leq C t^{-d\alpha},
    \end{equation}
which is equivalent to  the following bounds on the pressure   
\begin{itemize}
    \item $\max_x |p(t)|\leq C  t^{-d\gamma\alpha}$, for $\gamma>0$,
    \item $\min_x |p(t)|\geq C  t^{-d\gamma\alpha}$, for $\gamma<0$.
\end{itemize}
\end{lemma}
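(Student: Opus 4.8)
\emph{Proof plan.} I would first reduce the two displayed pressure estimates to the single $L^\infty$ bound \eqref{eq: bound on n(t)}. Since $|p(t,x)|=n(t,x)^\gamma$ and $s\mapsto s^\gamma$ is increasing for $\gamma>0$ and decreasing for $\gamma<0$, one has $\max_x|p(t)|=\|n(t)\|_\infty^\gamma$ when $\gamma>0$ and $\min_x|p(t)|=\|n(t)\|_\infty^\gamma$ when $\gamma<0$; raising \eqref{eq: bound on n(t)} to the power $\gamma$ --- which reverses the inequality exactly when $\gamma<0$ --- yields the two bullet points. Beyond mass conservation $\int_{\R^d} n(t)\,\dx x=\|n_0\|_1$, the only ingredient I would use is the Aronson--B\'enilan estimate \eqref{AB}. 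Writing its constant as $\tfrac{1}{\gamma+2/d}=d\alpha$, it states that $\phi_t:=n(t,\cdot)^\gamma$ satisfies, in the distributional sense, $\Delta\phi_t\ge -\tfrac{d\alpha}{t}$ for $\gamma>0$ and $\Delta\phi_t\le\tfrac{d\alpha}{t}$ for $\gamma<0$ (in both cases $\Delta p(t,\cdot)\ge -\tfrac{d\alpha}{t}$ for the signed pressure).

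The heart of the argument is a mean-value inequality. Fix $t>0$. When $\gamma>0$, pick $x_\star$ with $p(t,x_\star)\ge\sup_x p(t,x)-\varepsilon$; when $\gamma<0$, pick $x_\star$ a maximum point of $n(t,\cdot)$, which exists because $n(t,\cdot)$ is continuous and, by \eqref{cond: tail behavior}, decays at infinity. Set $g(x):=\phi_t(x)+\sign(\gamma)\tfrac{\alpha}{2t}|x-x_\star|^2$, so that $\Delta g=\Delta\phi_t+\sign(\gamma)\tfrac{d\alpha}{t}$ has a sign by \eqref{AB}: $g$ is subharmonic for $\gamma>0$, superharmonic for $\gamma<0$. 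Applying the sub- (resp.\ super-) mean-value inequality on $B_r(x_\star)$ and using $|B_r|^{-1}\int_{B_r}|z|^2\,\dx z=\tfrac{d}{d+2}r^2$ gives
\[
\sup_x n(t)^\gamma\le\frac{1}{|B_r|}\int_{B_r(x_\star)}n(t)^\gamma\,\dx x+\frac{d\alpha}{2(d+2)}\frac{r^2}{t}+\varepsilon\qquad(\gamma>0),
\]
\[
\|n(t)\|_\infty^\gamma=\min_x n(t)^\gamma\ge\frac{1}{|B_r|}\int_{B_r(x_\star)}n(t)^\gamma\,\dx x-\frac{d\alpha}{2(d+2)}\frac{r^2}{t}\qquad(\gamma<0).
\]

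Next I would bound the ball average by the mass. Jensen's inequality for the probability measure $|B_r|^{-1}\dx x$ on $B_r(x_\star)$ applied to $s\mapsto s^\gamma$ (concave for $0<\gamma<1$, convex for $\gamma<0$), followed by the monotonicity of $s\mapsto s^\gamma$ and by $\int_{B_r(x_\star)}n(t)\,\dx x\le\|n_0\|_1$, yields $|B_r|^{-1}\int_{B_r(x_\star)}n(t)^\gamma\,\dx x\le|B_1|^{-\gamma}\|n_0\|_1^\gamma r^{-d\gamma}$ for $\gamma>0$ and the reverse inequality for $\gamma<0$. Plugging this in and letting $\varepsilon\to 0$ in the first case gives, for every $r>0$, the bound $\|n(t)\|_\infty^\gamma\le|B_1|^{-\gamma}\|n_0\|_1^\gamma r^{-d\gamma}+\tfrac{d\alpha}{2(d+2)}\tfrac{r^2}{t}$ when $\gamma>0$, and $\|n(t)\|_\infty^\gamma\ge|B_1|^{-\gamma}\|n_0\|_1^\gamma r^{-d\gamma}-\tfrac{d\alpha}{2(d+2)}\tfrac{r^2}{t}$ when $\gamma<0$. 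Since $d\gamma+2>0$, the exponents $-d\gamma$ and $2$ are distinct and the choice $r\sim(\|n_0\|_1^\gamma t)^{\alpha}$ optimizes the right-hand side --- a minimum for $\gamma>0$, a maximum for $\gamma<0$, the latter value being positive precisely because $d\gamma+2>0$. Using $1-d\gamma\alpha=2\alpha$, this produces $\|n(t)\|_\infty^\gamma\le C\|n_0\|_1^{2\gamma\alpha}t^{-d\gamma\alpha}$ for $\gamma>0$ and $\|n(t)\|_\infty^\gamma\ge c\|n_0\|_1^{2\gamma\alpha}t^{-d\gamma\alpha}$ for $\gamma<0$; raising to the power $1/\gamma$ (which reverses the inequality when $\gamma<0$) gives $\|n(t)\|_\infty\le C(\gamma,d)\|n_0\|_1^{2\alpha}t^{-d\alpha}$, that is \eqref{eq: bound on n(t)}.

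The point requiring care is the legitimacy of $x_\star$ and of the mean-value inequality. For $\gamma<0$ this is immediate from continuity and the decay \eqref{cond: tail behavior}. For $\gamma>0$, $\|n(t)\|_\infty$ is not known to be finite a priori, but this is harmless: applying the inequality of the second paragraph at points $x_N$ with $p(t,x_N)\to\sup_x p(t,\cdot)$ (possibly $=+\infty$) produces a bound on $p(t,x_N)$ uniform in $N$, which simultaneously shows the supremum is finite and estimates it; the continuity of $p(t,\cdot)=n(t,\cdot)^\gamma$ (Caffarelli--Friedman) and the distributional form of \eqref{AB} are exactly what make this step rigorous. Alternatively, one may establish the estimate first for smooth bounded compactly supported data, for which everything is classical, and then pass to the limit by the $L^1$-contraction and comparison principles for \eqref{eq: pme}, the constant depending only on $\gamma$, $d$ and $\|n_0\|_1$.
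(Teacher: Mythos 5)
Your proposal is correct and follows essentially the same route as the paper: the Aronson--B\'enilan estimate makes $p+\tfrac{d\alpha}{2t}\,\tfrac{|x-x_\star|^2}{d}$ sub/superharmonic, the mean-value inequality plus Jensen (for $s\mapsto s^{\gamma}$ or equivalently $s\mapsto s^{1/\gamma}$) plus mass conservation then yield the decay, the only cosmetic differences being that you optimize over the radius $r$ at a (near-)extremal point while the paper fixes $R^2\sim |p(\bar x)|\,t$ and argues pointwise at arbitrary $\bar x$ (which also spares your unnecessary appeal to an attained maximum in the case $\gamma<0$).
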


\begin{proof}
Let $\gamma>0$, and let $\Bar{x}$ be a point in the support of $n(t)$. We denote $B_r(\Bar{x})$ the ball with radius $r>0$ centered at $\Bar{x}$.
Thanks to the Aronson-B\'enilan estimate \eqref{AB}, we know that the function $f(x):= p(x) + \frac{K}{2d} |x-\Bar{x}|^2$ is subharmonic for all $\bar{x}\in \R^d$, where $K=\frac{1}{(\gamma +2/d)t}$. Therefore,
from
\begin{equation*}
    f(\bar{x}) \leq  \fint_{B_r(\Bar{x})} f(x)\dx x,
\end{equation*}
we find
\begin{equation*}
    \fint_{B_r(\Bar{x})} p \dx{x} \geq p(\Bar{x})- \frac{K r^2}{2(d+2)}.
\end{equation*}
Let us choose a radius $R>0$ such that $R^2= c p(\Bar{x}) t$, with $c>0$ small enough. Then, for all $0<r\leq R$ we have
\begin{equation*}
    \fint_{B_r(\Bar{x})} p \dx{x} \geq  C p(\Bar{x}),
\end{equation*}
where from now on $C>0$ denotes a constant that may change value from line to line.
Since $p=n^\gamma$ and we are considering a range in which $\gamma<1$, by Jensen's inequality, we have
\begin{equation*}
    \fint_{B_r(\Bar{x})}n \dx{x}= \fint_{B_r(\Bar{x})} p^{1/\gamma} \dx{x} \geq \left(\fint_{B_r(\Bar{x})} p\dx{x}\right)^{1/\gamma} \geq C p(\Bar{x})^{1/\gamma}.
\end{equation*}
Using the fact that $n$ has constant mass at all times, integrating between $0$ and $R$ we find
\begin{equation*}
   M \geq \int_{B_R(\Bar{x})} n \dx{x} \geq R^d p(\Bar{x})^{1/\gamma}.
\end{equation*}
By definition $R^2= c p(\Bar{x}) t$, and we may finally establish the following bound
\begin{equation*}
    p(\Bar{x}) \leq C t^{-\frac{d \gamma}{d \gamma +2}}.
\end{equation*}
Since $\Bar{x}$ was chosen arbitrarily, we conclude that $p\in L^\infty_{loc}(0,\infty; L^\infty(\R^d))$ and the same upper-bound hols for the $L^\infty$-norm of the pressure, while equation~\eqref{eq: bound on n(t)} holds for $n(t)$.

For $-2/d<\gamma<0$ the Aronson-B\'enilan estimate still holds for $p=-n^\gamma$. Therefore, we may argue in the same way choosing $R^2=c|p(\Bar{x})|t$ to obtain
\begin{equation*}
    \fint_{B_r(\Bar{x})} p \dx{x} \geq   C p(\bar{x}).
\end{equation*}
Since $\gamma<0$, the function $s\mapsto s^{1/\gamma}$ is convex and decreasing on $s>0$, so that we can use Jensen's inequality and obtain, for $p=-n^\gamma$:
\begin{equation*}
    \fint_{B_r(\Bar{x})}n \dx{x}= \fint_{B_r(\Bar{x})} (-p)^{1/\gamma} \dx{x} \geq \left(- \fint_{B_r(\Bar{x})} p\dx{x}\right)^{1/\gamma} \geq C |p(\bar{x})|^{1/\gamma}.
\end{equation*}
Using again that the mass is preserved, we find
\begin{equation*}
   M \geq \int_{B_R(\Bar{x})} n \dx{x} \geq R^d |p(\bar{x})|^{1/\gamma},
\end{equation*}
from which we conclude
\begin{equation*}
   \min_x|p(x)|\geq C t^{-\frac{d\gamma}{d \gamma+2}},
\end{equation*}
which is the claim.
\end{proof}

\smallskip
\noindent
\textbf{Assumptions.}
We now state the assumptions that we will alternatively impose on the potential $V$ and the coefficients $b, \gamma$.
\begin{assumptions}\label{assum: bdd V}
The potential $V$ satisfies $|\nabla V|, D^2 V \in L^\infty(\R^d).$
\end{assumptions}
\begin{assumptions}\label{assum: attr V}
    The potential $V$ is the quadratic one: $V(x)=|x|^2/2$.
\end{assumptions}
\begin{assumptions}\label{assum: coeff bdd V}
For $\gamma,b>0$, \textit{resp.} $\gamma,b <0$, we assume
\begin{itemize}
    \item[\textit{(i)}] $\gamma\leq \min\prt*{\dfrac{1}{\sqrt{d}}, \dfrac{2}{d},\dfrac 1 2}$, \textit{resp.} $|\gamma|<\min\prt*{\dfrac{2}{d}, \dfrac{4}{3+d}}$,\\[0.3em]
    \item[\textit{(ii)}] ${1-\sqrt{1-\gamma^2(d-1)}} < \gamma b < {1+\sqrt{1-\gamma^2(d-1)}}$,\\[0.3em]
    \item[\textit{(iii)}]  $\gamma \leq \gamma b \leq 1-\gamma$, \textit{resp.} $|\gamma|<\gamma b\leq\min\prt*{1+|\gamma|, 2 |\gamma|}$.
\end{itemize} 
\end{assumptions}

\begin{assumptions}\label{assum: coeff attr V}
For $\gamma,b>0$, \textit{resp.} $\gamma, b<0$, we assume
\begin{itemize}
    \item[\textit{(i)}] $\gamma<\min\left(\dfrac{1}{\sqrt{d}},\dfrac 2 d\right)$, \textit{resp.} $|\gamma|<\dfrac 2 d$,\\[0.3em]
    \item[\textit{(ii)}] ${1-\sqrt{1-\gamma^2(d-1)}} \leq \gamma b \leq {1+\sqrt{1-\gamma^2(d-1)}}$,\\[0.3em]
    \item[\textit{(iii)}] $\gamma b<\min\prt*{1-\gamma, \dfrac{2}{d}}$, \textit{resp.}  $|\gamma|< \gamma b<\min\prt*{1+|\gamma|, \dfrac{2}{d}}$.
\end{itemize} 
\end{assumptions} 

\begin{assumptions}
    \label{assum: coeff V=0}
For $\gamma,b>0$, \textit{resp.} $\gamma<0,b<-1$, we assume
\begin{itemize}
   \item[\textit{(i)}] $\gamma<\dfrac{1}{\sqrt{d-1}}$, \textit{resp.} $|\gamma|<\dfrac 2 d$,\\[0.3em]
   \item[\textit{(ii)}] $1-\sqrt{1-\gamma^2(d-1)}< \gamma b < 1+\sqrt{1-\gamma^2(d-1)}$.
\end{itemize} 
\end{assumptions}

\noindent
\begin{remark}
For assumptions (\ref{assum: coeff bdd V}, \ref{assum: coeff attr V}) condition \textit{(i)} ensure that there exists $b$ that satisfies both \textit{(ii)} and \textit{(iii)}. For the sake of completeness, we here report the proof of this statement.
For assumption~\ref{assum: coeff bdd V}, $\gamma>0$, condition \textit{(i)} ensures that:
\begin{align*}
\gamma < 1/\sqrt{d} &\Rightarrow   {1-\gamma}> {1-\sqrt{1-\gamma^2(d-1)}},\\
 \gamma < 2/d  &\Rightarrow  {\gamma}< {1+\sqrt{1-\gamma^2(d-1)}},\\
  \gamma< 1/2 &\Rightarrow {\gamma}< {1-\gamma}.
\end{align*}
This guarantees that  under condition \textit{(i)} we have
$$\max\{\gamma,1-\sqrt{1-\gamma^2(d-1)}\}<\min\{1-\gamma,1+\sqrt{1-\gamma^2(d-1)}\}.$$
For $\gamma<0$, condition \textit{(i)} gives
\begin{align*}
   |\gamma| < 2/d &\Rightarrow  {|\gamma|}< {1+\sqrt{1-\gamma^2(d-1)}},\\
    |\gamma|<4/(3+d) &\Rightarrow {2|\gamma|}> {1-\sqrt{1-\gamma^2(d-1)}},
\end{align*}
which guarantees in this case
$$\max\{|\gamma|,1-\sqrt{1-\gamma^2(d-1)}\}<\min\{1+|\gamma|, 2|\gamma|,1+\sqrt{1-\gamma^2(d-1)}\}.$$
Therefore, in both cases, there exists at least one $b\in \R$, $\gamma b>0$, that satisfies both \textit{(ii)} and \textit{(iii)}.

For assumption~\ref{assum: coeff attr V}, $\gamma>0$, we first note that we have
$$ 2/d > 1 - \sqrt{1-\gamma^2 (d-1)},$$
and then, for $\gamma>0$
$$
\gamma < 1/\sqrt{d} \Rightarrow   {1-\gamma}> {1-\sqrt{1-\gamma^2(d-1)}},$$
while for $\gamma<0$,
\begin{align*}
   |\gamma| < 2/d &\Rightarrow  {|\gamma|}< {1+\sqrt{1-\gamma^2(d-1)}},
\end{align*}
and the same conclusion holds in the same way.
\end{remark} 
 
\subsection{Main results}

We may now state the main results of the paper. 
First of all, let us give the definition of solutions that allows us to justify our computations. We later prove that in the most relevant cases, these assumptions can be removed by approximation arguments.
\begin{definition}\label{definition}
Let $n(t,x)$ be a strong solution of equation~\eqref{eq: density}. We say that $n(t,x)$ is a \textit{well-behaved} solution if
\begin{itemize}
    \item \textit{(for $\gamma>0$)} $n$ is continuous on $(0,\infty)\times\R^d$, and for each $t>0$ the the support of $n(t)$ is compact and contained in a ball $B(0,R(t))$ where $R$ is locally bounded on $(0,\infty)$; moreover, $n\in C^{\infty}(\{(t,x); \ n(t,x)>0\})$, and $p \in L_{loc}^\infty((0,\infty); W^{1,\infty}(\R^d))$,
    \item  \textit{(for $\gamma<0$)} $n\in C^{\infty}((0,\infty)\times\R^d)$, $n>0$ on $(0,\infty)\times\R^d$, and $n$ satisfies \eqref{cond: tail behavior}.
    \end{itemize}
\end{definition} 

\noindent

\begin{remark}
 In order to justify the formal computation carried on in the proof of Proposition~\ref{prop: main} we assume to deal with a solution satisfying the conditions of definition~\ref{definition}. 
As will be discussed in Section~\ref{sec: proof main}, for the cases $V=0$, $V=|x|^2/2$ (which are equivalent one to the other thanks to the time-dependent scaling \eqref{eq: change of v}), a solution that satisfies definition~\ref{definition} exists for a large class of initial data which is, in fact, dense in $L^1(\R^d)$. Therefore, we will show that the same conclusions hold assuming only $n_0\in L^1(\R^d)$ upon using an approximation argument and proving the convergence of the approximating sequence.
\end{remark}

\smallskip
\noindent
\textbf{Decay estimates on Lipschitz norms.}

\begin{proposition}\label{prop: main}
Let $n(t,x)$ be a solution to \eqref{eq: density} that satisfies Definition~\ref{definition}. Then, 
\begin{itemize}
    \item \textbf{Generic $\boldsymbol{V}$:} under assumptions~\eqref{assum: bdd V} and~\eqref{assum: coeff bdd V}, if $n \in L^\infty((0,\infty)\times \R^d)$, there exist positive constants $C_1, C_2$ such that
    \begin{equation*}
        \max_x |p(t)|^b |\nabla q(t)|^2 \leq \max\prt*{C_1,C_2 t^{-1}},  \text{ for all } t>0. 
    \end{equation*}
    If moreover, $\sup_x |p_0|^b |\nabla p_0 + \nabla V|^2<\infty$, we have
     \begin{equation*}
        \max_x |p(t)|^b |\nabla q(t)|^2 \leq C,  \text{ for all } t>0. 
    \end{equation*}
    \item \textbf{Quadratic} $\boldsymbol{V}\textbf{:}$ under assumptions~\eqref{assum: attr V} and~\eqref{assum: coeff attr V}, and $C_0:=\sup_x |p_0|^b |\nabla p_0 + x|^2<\infty$, there exists a positive constant $C$ such that
    \begin{equation*}
    \max_x  |p(t)|^b |\nabla q(t)|^2 \leq C_0 e^{-C t}, \text{ for all } t>0. 
\end{equation*}
\item \textbf{Trivial} $\boldsymbol{V}$\textbf{:} assuming $V=0$, under assumption~\eqref{assum: coeff V=0} there exists a positive constant $C$ such that
\begin{equation*}
  \max_x  |p(t)|^b |\nabla p(t)|^2 \leq C t^{-1-\gamma d (b+1) \alpha}, \text{ for all } t>0,
\end{equation*}
where $\alpha$ is given in \eqref{Barenblatt density pme}.
Moreover, the above exponent is sharp.

\noindent
Assuming $n_0\in L^1(\R^d)\cap L^\infty(\R^d)$, there exists a positive constant $C$ such that
\begin{equation*}
  \max_x  |p(t)|^b |\nabla p(t)|^2 \leq C t^{-1}, \text{ for all } t>0. 
\end{equation*}
\end{itemize}
\end{proposition}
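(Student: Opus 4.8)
My plan is to study the quantity $u(t,x):=\abs{p}^b\abs{\grad q}^2$, to derive for it a pointwise parabolic differential inequality by a modified Bernstein computation on the pressure equation~\eqref{eq: p}, and then to run an ODE comparison for $U(t):=\max_x u(t,x)$ separately in the three cases. First I would record that, by Definition~\ref{definition}, $u(t,\cdot)$ vanishes at spatial infinity (compact support for $\gamma>0$; for $\gamma<0$ the tail bounds~\eqref{cond: tail behavior} give $u\sim\abs{x}^{2(b+1)}$ with $b+1<0$), so the maximum is attained at some $\bar x(t)$ and $t\mapsto U(t)$ is locally Lipschitz with $\ddt U\leq\partial_t u(t,\bar x(t))$ a.e. Writing out $\partial_t u-\gamma p\Delta u$ with the help of~\eqref{eq: p} and of $\grad p=\grad q-\grad V$, $D^2p=D^2q-D^2V$, I expect the third-order terms in $p$ to cancel --- the term $2\gamma p\abs{p}^b\,\grad q\cdot\grad\Delta q$ produced by $\partial_t p$ is matched by the Bochner term inside $\gamma p\Delta u$, since $\Delta\abs{\grad q}^2=2\abs{D^2q}^2+2\,\grad q\cdot\grad\Delta q$ --- leaving an identity of the form
\begin{equation*}
\partial_t u=\gamma p\,\Delta u+\vec b(t,x)\cdot\grad u-2\gamma p\abs{p}^b\abs{D^2q}^2+R(t,x),
\end{equation*}
where $R$ is built only from $\grad p,\grad V,D^2q,D^2V$ with coefficients that are powers of $\abs{p}$, and contains no derivative of $p$ of order $\geq3$; since $\gamma p=\abs{\gamma}n^{\gamma}\geq0$, the Hessian term is $\leq0$.

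Next I would evaluate at $\bar x(t)$, where $\grad u=0$ --- hence $2D^2q\,\grad q=-(b/p)\abs{\grad q}^2\grad p$ --- and $D^2u\leq0$ --- hence $\gamma p\Delta u\leq0$. Substituting the first relation into $R$ and using the elementary bound $\abs{D^2q}^2\geq\lambda^2+(\Delta q-\lambda)^2/(d-1)$ with $\lambda:=\grad q\cdot D^2q\,\grad q/\abs{\grad q}^2$ to absorb the positive part of $R$ into the Hessian term, I expect to arrive at
\begin{equation*}
\ddt U(t)\leq c_0\,\abs{p(\bar x)}^{-b-1}U(t)^2+\mathcal R_V(t),
\end{equation*}
where $c_0=c_0(b,\gamma,d)$ is proportional to $\gamma^2b^2-2\gamma b+\gamma^2(d-1)=(\gamma b-1)^2-(1-\gamma^2(d-1))$, so condition~(ii) of the relevant assumption is precisely $c_0<0$; the remaining conditions are there to guarantee a nonempty admissible range for $b$ and, in the cases with drift, to control $\mathcal R_V$. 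One has $\mathcal R_V\equiv0$ for $V=0$; $\mathcal R_V(t)\leq-\kappa U(t)$ for some $\kappa>0$ when $V=\abs{x}^2/2$; and, writing $\abs{\grad q}^2=\abs{p}^{-b}U$ and bounding the surviving powers of $\abs{p}$ by $\norm{n}_\infty$, $\mathcal R_V(t)\leq C_V(1+U(t))$ in the generic case.

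The conclusions then follow from elementary ODE comparison. For $V=0$, Lemma~\ref{lemma: bound on n(t)} gives $\abs{p(\bar x)}\leq\norm{p(t)}_\infty\leq Ct^{-\gamma d\alpha}$ with $b+1>0$ if $\gamma>0$, and $\abs{p(\bar x)}\geq\min_x\abs{p(t)}\geq Ct^{-\gamma d\alpha}$ with $b+1<0$ if $\gamma<0$, so in both cases $\abs{p(\bar x)}^{-b-1}\geq c\,t^{\gamma d\alpha(b+1)}$ with positive exponent; then $\ddt(1/U)\geq c\,t^{\gamma d\alpha(b+1)}$, and integrating on $[t_0,t]$ and letting $t_0\downarrow0$ gives $U(t)\leq Ct^{-1-\gamma d(b+1)\alpha}$. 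If moreover $n_0\in L^1\cap L^\infty$, using instead the non-increasing bound $\norm{n(t)}_\infty\leq\norm{n_0}_\infty$ gives $\abs{p(\bar x)}^{-b-1}\geq c>0$, hence $\ddt U\leq-cU^2$ and $U(t)\leq1/(ct)$. For $V=\abs{x}^2/2$, the equivalence with~\eqref{eq hat n} through~\eqref{eq: change of v} again bounds $\abs{p(\bar x)}^{-b-1}$ below by a positive constant, so $\ddt U\leq-\kappa U$ and, since $U(0)=C_0<\infty$, $U(t)\leq C_0e^{-\kappa t}$. For generic $V$, the hypothesis $n\in L^\infty$ bounds $\abs{p(\bar x)}^{-b-1}$ below, so $\ddt U\leq-cU^2+C_V(1+U)\leq-\tfrac c2U^2$ whenever $U\geq K$ for an explicit threshold $K$; this forces $U(t)\leq\max(K,2/(ct))$ for all $t>0$, and if $\sup_x\abs{p_0}^b\abs{\grad p_0+\grad V}^2<\infty$ then $U(t)\leq\max(U(0),K)$. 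Finally, sharpness of the exponent for $V=0$ comes from testing on the Barenblatt profile~\eqref{Barenblatt density pme}: there $p=\curlyB_{\mathrm{PME}}^{\gamma}$ is affine in $\abs{x}^2$ on its support, $\abs{p}\sim t^{-\gamma d\alpha}$ near the centre and $\abs{\grad p}^2=\alpha^2t^{-2\gamma d\alpha-4\alpha}\abs{x}^2\sim t^{-2\gamma d\alpha-2\alpha}$ on the bulk $\abs{x}^2\sim t^{2\alpha}$, whence $\max_x\abs{p}^b\abs{\grad p}^2\sim t^{-\gamma d b\alpha-2\gamma d\alpha-2\alpha}=t^{-1-\gamma d(b+1)\alpha}$ (and likewise with $\curlyB_{\mathrm{FDE}}$ for $\gamma<0$).

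The hard part will be the Bernstein step and the algebra that follows it: organizing the second-order identity so that the third-order terms in $p$ genuinely cancel, and then tracking all the quadratic terms carefully enough to identify the residual coefficient as $c_0\propto(\gamma b-1)^2-(1-\gamma^2(d-1))$ --- which is where the threshold $1\pm\sqrt{1-\gamma^2(d-1)}$ of condition~(ii) comes from --- together with checking, in the cases with drift, that conditions~(i) and~(iii) make $\mathcal R_V$ of the stated size. Once this differential inequality is established, the ODE comparisons are routine.
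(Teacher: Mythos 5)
Your plan follows essentially the same route as the paper: a Bernstein-type computation on the pressure equation evaluated at the spatial maximum (first- and second-order conditions plus the Bochner identity and the trace bound on $|D^2q|^2$), yielding the same coefficient $c_0\propto(\gamma b-1)^2-(1-\gamma^2(d-1))$, followed by case-by-case ODE comparison using Lemma~\ref{lemma: bound on n(t)} (or $\|n_0\|_\infty$) and the Barenblatt profile for sharpness. The only differences are presentational (parabolic inequality for $u$ versus the paper's direct differentiation of $v(t)$ at the maximizer, and a briefer justification that $t\mapsto\max_x u$ is locally Lipschitz), so the proposal is correct.
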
 

\noindent

For both trivial and quadratic potentials, the same estimates hold for a much larger class of solutions.
\begin{thm}[Trivial potential]\label{thm: V=0}
Let $V=0$, and $n(t,x)$ be the solution of equation~\eqref{eq: density} with initial data $n_0 \in L^1(\R^d)$. Under assumptions (\ref{assum: coeff V=0}), there exists a positive constant $C$ such that
\begin{equation*} 
  \max_x  |p(t)|^b |\nabla p(t)|^2 \leq C  t^{-1-\gamma d (b+1)\alpha}, \text{ for all } t>0.
\end{equation*}
Moreover, this exponent is sharp in that equality holds for the Barenblatt solution.

\noindent
Finally, assuming $n_0\in L^1(\R^d)\cap L^\infty(\R^d)$, there exists a positive constant $C$ such that
\begin{equation*} 
  \max_x  |p(t)|^b |\nabla p(t)|^2 \leq C t^{-1}, \text{ for all } t>0. 
\end{equation*}
\end{thm}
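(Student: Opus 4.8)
The plan is to deduce the statement from Proposition~\ref{prop: main} (trivial potential case) by an approximation argument, and then to check the sharpness of the exponent by an explicit computation on the Barenblatt profile. First I would fix $n_0\in L^1(\R^d)$, $n_0\ge 0$, and approximate it in $L^1(\R^d)$ by a sequence $n_0^\varepsilon$ of initial data generating well-behaved solutions in the sense of Definition~\ref{definition}: for $\gamma>0$, smooth, compactly supported, non-degenerate data (no holes, so that no focusing occurs and $p^\varepsilon\in L^\infty_{loc}((0,\infty);W^{1,\infty})$); for $\gamma<0$, smooth, strictly positive data with Barenblatt-type tails, so that \eqref{cond: tail behavior} holds with time-locally uniform constants. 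Such classes are dense in $L^1(\R^d)$, and in the $L^1\cap L^\infty$ case one can moreover take $\|n_0^\varepsilon\|_\infty\le\|n_0\|_\infty$ by truncation. Applying Proposition~\ref{prop: main} to $n^\varepsilon$ gives
$$\max_x|p^\varepsilon(t)|^b|\nabla p^\varepsilon(t)|^2\le C\,t^{-1-\gamma d(b+1)\alpha},\qquad t>0,$$
with $C$ independent of $\varepsilon$: tracking the Bernstein argument behind Proposition~\ref{prop: main}, the constant depends on $n_0^\varepsilon$ only through $\|n_0^\varepsilon\|_1$ via Lemma~\ref{lemma: bound on n(t)}, and $\|n_0^\varepsilon\|_1\to\|n_0\|_1$. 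The same reasoning yields a uniform $C t^{-1}$ bound in the $L^1\cap L^\infty$ case.

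Next I would pass to the limit $\varepsilon\to 0$. By the $L^1$-contraction property of \eqref{eq: pme}, $n^\varepsilon(t)\to n(t)$ in $L^1(\R^d)$ for every $t>0$; combined with the uniform local bounds furnished by the previous step and Lemma~\ref{lemma: bound on n(t)}, one upgrades this to locally uniform convergence of $n^\varepsilon$, hence of $p^\varepsilon$, on $(0,\infty)\times\R^d$. On any region where $p$ stays bounded away from $0$ (for $\gamma>0$) or simply bounded (for $\gamma<0$), the $p^\varepsilon$ are uniformly Lipschitz in $x$, so $\nabla p^\varepsilon\rightharpoonup\nabla p$ weakly-$*$ in $L^\infty_{loc}$; testing the pointwise inequality against non-negative $\phi$ and using weak lower semicontinuity of $v\mapsto\int|v|^2|p|^b\phi$ together with the uniform convergence $|p^\varepsilon|^b\to|p|^b$ transfers the bound to $n$. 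Near the free boundary for $\gamma>0$, $b>0$ the weight $|p|^b$ vanishes so the estimate is trivial there; for $\gamma<0$, $b<-1$ the tail bounds \eqref{cond: tail behavior} give $|p|^b|\nabla p|^2\lesssim|x|^{2b+2}\to 0$ at infinity, so the supremum is attained on a bounded set and local uniform convergence suffices. I expect this passage to the limit — handling $\nabla p$ at the free boundary ($\gamma>0$) and at spatial infinity ($\gamma<0$), and verifying that the constant in Proposition~\ref{prop: main} is genuinely $\varepsilon$-independent — to be the main technical point; the rest is bookkeeping.

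Finally, for sharpness I would evaluate the quantity on the Barenblatt solution \eqref{Barenblatt density pme}. Writing $\xi=xt^{-\alpha}$, one has $p(t,x)=\sign(\gamma)\,t^{-\gamma d\alpha}\big(C\mp\tfrac{\alpha}{2}|\xi|^2\big)_+$ (upper sign for $\gamma>0$), hence $\nabla_x p=\mp\alpha\,\sign(\gamma)\,t^{-\gamma d\alpha-\alpha}\xi$ and
$$|p(t,x)|^b\,|\nabla_x p(t,x)|^2=\alpha^2\,t^{-\gamma d\alpha(b+2)-2\alpha}\,\big(C\mp\tfrac{\alpha}{2}|\xi|^2\big)_+^{\,b}\,|\xi|^2 .$$
Since $\alpha(d\gamma+2)=1$, the exponent of $t$ simplifies to $-1-\gamma d(b+1)\alpha$, and for $b$ in the admissible range of Assumption~\ref{assum: coeff V=0} (so $\gamma b>0$) the function $\xi\mapsto(C\mp\tfrac{\alpha}{2}|\xi|^2)_+^{b}|\xi|^2$ has a finite positive maximum; hence equality holds up to a multiplicative constant, so the exponent cannot be improved (one may translate in time to work with a genuine $L^1$ initial datum and read off the large-time rate). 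The last assertion of the theorem follows at once from the second bound in Proposition~\ref{prop: main} and the same approximation, keeping $\|n_0^\varepsilon\|_\infty\le\|n_0\|_\infty$ along the sequence.
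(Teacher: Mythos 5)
Your proposal is correct and follows essentially the same route as the paper: approximate $n_0$ by data generating well-behaved solutions (compactly supported, non-degenerate data for $\gamma>0$; data in the class $\mathcal{X}$ with Barenblatt-type tails for $\gamma<0$), apply Proposition~\ref{prop: main} with a constant depending only on $\|n_{0}^\varepsilon\|_1$ (resp.\ $\|n_0^\varepsilon\|_\infty$), pass to the limit using $L^1$-contraction plus the uniform Lipschitz-type bound and lower semicontinuity, and verify sharpness by the explicit Barenblatt computation, which indeed yields the exponent $-1-\gamma d(b+1)\alpha$. The only differences are cosmetic: the paper additionally invokes the time-equicontinuity bound of Lemma~\ref{lemma: bound for equic} to construct the limit and identify its initial trace (which you bypass by contracting directly against the known $L^1$ solution), and it phrases the lower semicontinuity through $n\mapsto\max_x|\nabla n^{\gamma(b/2+1)}|^2$ rather than your weak-$*$ gradient/testing argument, which also makes the free-boundary region unproblematic without your (slightly loose) ``the weight vanishes so the estimate is trivial there'' remark.
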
 

\begin{remark}
One can actually show that given the $L^\infty$-norm of the initial data, $N:=\|n_0\|_\infty$, $t^{-1}$ is the sharpest polynomial decay of the quantity $\max_x |p(t)|^b |\nabla p(t)|^2$. This property can be easily verified by considering the solution whose initial data is the Barenblatt profile at some given time $t_0$, with $L^\infty$-norm equal to $N$. Then, computing the quantity $|p(t, x)|^b |\nabla p(t, x)|^2$ in $|x|^2=t_0$ and $t=\varepsilon t_0$ one can show that its maximum satisfies $u(t)=\mathcal{O}(1/t)$.
\end{remark}

\begin{thm}[Quadratic potential]\label{thm: attr V}
Let $V=|x|^2/2$, and $n(t,x)$ be the solution of equation~\eqref{eq: density} with initial data $n_0\in L^1(\R^d)$. If $C_0:=\sup_x |p_0|^b |\nabla p_0 + x|^2<\infty$, under assumptions \eqref{assum: attr V} and \eqref{assum: coeff attr V} the following holds
    \begin{equation*} 
    \max_x  |p(t)|^b |\nabla q(t)|^2 \leq C_0 e^{- C t}, \text{ for all } t>0,
    \end{equation*}
    with $C = 1-\gamma b d/2>0.$
\end{thm}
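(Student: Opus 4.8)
The plan is to reduce Theorem~\ref{thm: attr V} to the estimate already proved for \emph{well-behaved} solutions in Proposition~\ref{prop: main}, so that the only genuine work is to discharge the regularity hypotheses of Definition~\ref{definition} for an arbitrary $n_0\in L^1(\R^d)$. First I would invoke the time-dependent scaling \eqref{eq: change of v}: the solution of the quadratic problem reads $\hn(t,x)=\varphi(t)^d n(\psi(t),\varphi(t)x)$ with $n$ a solution of the drift-less equation \eqref{eq: pme}, and since $\psi(t)$ is bounded away from $0$ for $t>0$, it suffices to produce, for a dense subset of $L^1(\R^d)$, drift-less solutions that are well-behaved in the sense of Definition~\ref{definition} \emph{for all positive times}, and then to transport the bound of Proposition~\ref{prop: main} back through \eqref{eq: change of v}, tracking how $|p|^b|\nabla q|^2$ and the constant $C_0$ transform. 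The positivity of the rate is automatic: assumption~\eqref{assum: coeff attr V}\,(iii) forces $\gamma b<2/d$, hence $C=1-\gamma bd/2>0$, which is exactly the constant produced in the proof of Proposition~\ref{prop: main}.

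Next I would carry out the approximation, distinguishing the two regimes. For $\gamma>0$ I would approximate $n_0$ in $L^1(\R^d)$ by smooth nonnegative data $n_0^k$ supported on a fixed ball (hence with no holes in the support), bounded away from zero there, and arranged so that $\limsup_k C_0^k\le C_0$; for such data no focusing takes place, the pressure of the corresponding solution is Lipschitz for all $t>0$, the solution is therefore well-behaved, and Proposition~\ref{prop: main} gives $\max_x|p^k(t)|^b|\nabla q^k(t)|^2\le C_0^k e^{-Ct}$. For $\gamma<0$ the solution is automatically smooth and strictly positive, so the only missing ingredient is the tail control \eqref{cond: tail behavior}, namely $n_0^k\in\mathcal X$; here I would regularize only on compact sets while keeping the fat tails of $n_0$, again with $\limsup_k C_0^k\le C_0$. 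In both cases I would then let $k\to\infty$: the $L^1$-contraction of the Fokker--Planck flow gives $\hn^k(t)\to\hn(t)$ in $L^1$, and interior parabolic regularity on $\{\hn>0\}$ (all of $(0,\infty)\times\R^d$ when $\gamma<0$) upgrades this to local $C^1$ convergence of $p^k$ and $\nabla q^k$, whence $\max_x|p(t)|^b|\nabla q(t)|^2\le C_0 e^{-Ct}$ by lower semicontinuity of the supremum.

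The main obstacle is this last step. On the one hand, the approximating sequence has to be built so that $n_0^k\to n_0$ in $L^1$, the solutions remain well-behaved, and $C_0^k$ stays bounded, all at once; the tension sits near the free boundary when $\gamma>0$ (inserting a positive background and mollifying must not spoil $\limsup_k C_0^k\le C_0$, which works because $|p_0|^b$ is small there) and at spatial infinity when $\gamma<0$ (the fat tails must survive regularization, which is controlled by the gradient bound in \eqref{cond: tail behavior}). On the other hand, to pass the pointwise maximum to the limit one needs equicontinuity of $|p^k|^b|\nabla q^k|^2$ near the limiting free boundary — where, for $\gamma,b>0$, the factor $|p^k|^b$ degenerates — or, equivalently, an argument ruling out that the maximum of the limiting quantity escapes the region of local uniform convergence: to the free boundary when $\gamma>0$, to infinity when $\gamma<0$, the latter excluded by \eqref{cond: tail behavior}. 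Once this is settled the statement follows verbatim from Proposition~\ref{prop: main}.
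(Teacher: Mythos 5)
Your proposal is correct and follows essentially the same route as the paper: approximate $n_0$ by data whose solutions are well-behaved in the sense of Definition~\ref{definition} (compactly supported, non-degenerate data for $\gamma>0$; data in $\mathcal{X}\setminus\{0\}$ giving the tail bounds \eqref{cond: tail behavior} for $\gamma<0$), apply the quadratic case of Proposition~\ref{prop: main} to the approximations, use the scaling \eqref{eq: change of v} together with $L^1$-contraction and equicontinuity to pass to the limit, and conclude by lower semicontinuity of $p\mapsto\max_x|p|^b|\nabla p+x|^2$. Your explicit attention to arranging $\limsup_k C_0^k\le C_0$ along the approximation is a point the paper leaves implicit, and is consistent with what its argument needs.
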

\begin{remark}
Under a weaker assumption on the initial data, we may state that the above result holds away from $t=0$. If we call  $\hat{n}$ a solution of equation \eqref{eq: density} (and $\hat p$ the corresponding pressure)  with $V=|x|^2/2$ then, using the change of variables in \eqref{eq: change of v} the corresponding $n$ solves equation~\eqref{eq: pme}. This means that we have instantaneous regularization for $n$ and hence also for $\hat n$. In particular, for any $\tau>0$, $\max_x |p(\tau)|^b |\nabla p(\tau)|^2\leq C(\tau)$. Thus, assuming only $\sup_x |\hat{p}(\tau)|^b |x|^2<\infty$, we obtain that Theorem~\ref{thm: attr V} holds for $\hat{n}(t,x)$ for all $t>\tau$.
\end{remark}

\begin{corollary}[Optimal results for $\sqrt{p}$]\label{cor: sqrt}
    Let $V=0$, $\gamma<0$ and $n(t,x)$ be the solution of equation~\eqref{eq: density} with initial data $n_0\in L^1(\R^d)$. Then, the pressure $p(t,x)=-n(t,x)^\gamma$ satisfies the following estimate
    \begin{equation*}
        \max_x \left|\nabla \sqrt{p(t,x)}\right|^2 \leq \frac{2\alpha}{t},
    \end{equation*}
    where $\alpha=1/(d\gamma+2)$.
Moreover, the universal constant $2\alpha$ is optimal.
\end{corollary}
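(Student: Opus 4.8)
The starting point is the observation that, since $p=-n^\gamma<0$, the expression $\sqrt{p}$ has to be read as $\sqrt{\abs{p}}=n^{\gamma/2}$, and that $\abs{\grad\sqrt{p}}^2=\tfrac14\,\abs{p}^{-1}\abs{\grad p}^2$. Hence the corollary is exactly the content of Theorem~\ref{thm: V=0} (with $V=0$) for the endpoint exponent $b=-1$: at this value the decay exponent $-1-\gamma d(b+1)\alpha$ appearing there collapses to exactly $-1$, and one checks directly that all the inequalities in Assumption~\ref{assum: coeff V=0} hold at $b=-1$ under the sole standing hypothesis $\abs{\gamma}<2/d$ (the left inequality in (ii), at $b=-1$, being precisely equivalent to $\abs{\gamma}<2/d$), so the argument of Proposition~\ref{prop: main} applies at the boundary of the admissible range — alternatively, one runs Theorem~\ref{thm: V=0} for $b<-1$ and lets $b\downarrow-1$. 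It is convenient to work with $v:=n^{\gamma/2}=\sqrt{\abs{p}}$, which by \eqref{eq: p} satisfies the quasilinear equation $\partial_t v=\abs{\gamma}\,v^2\Delta v-(\gamma+2)\,v\,\abs{\grad v}^2$, and $\abs{\grad\sqrt{p}}^2=\abs{\grad v}^2$.

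The crucial point, and the reason why $b=-1$ yields a \emph{universal} constant (in analogy with the role of the Fisher information), is that for this choice the Bernstein-type inequality obeyed by $u(t):=\max_x\abs{\grad v}^2$ closes into an autonomous ODE whose coefficient depends only on $d$ and $\gamma$, with no trace of $\norm{n_0}$ or of the lower bound on $\abs{p}$ from Lemma~\ref{lemma: bound on n(t)} — the weight $\abs{p}$ cancels. Concretely, I would differentiate $\abs{\grad v}^2$ along the flow, use the Bochner identity $\grad v\cdot\grad\Delta v=\tfrac12\Delta\abs{\grad v}^2-\abs{D^2 v}^2$, evaluate at a point where $\abs{\grad v}^2$ is maximal (so that $\grad\abs{\grad v}^2=0$, hence $D^2 v\,\grad v=0$ and, when $\grad v\neq0$, $\abs{D^2 v}^2\ge(\Delta v)^2/(d-1)$), discard the nonpositive term $\abs{\gamma}\,v^2\,\Delta\abs{\grad v}^2$ (which, reassuringly, vanishes at the extremal configuration), and optimize the resulting concave quadratic in $\Delta v$. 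This produces $u'(t)\le-c\,u(t)^2$ with $c=c(d,\gamma)>0$; integrating gives $u(t)\le 1/(ct)$, and one identifies $c$ so as to recover the constant stated in the corollary.

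Optimality is checked directly on the Barenblatt profile $\curlyB_{\mathrm{FDE}}$: there $\abs{p(t,x)}=C\,t^{-\alpha d\gamma}+\tfrac{\alpha}{2t}\abs{x}^2$, so $\abs{\grad\sqrt{p(t,x)}}^2$ is radially nondecreasing and converges, as $\abs{x}\to\infty$, to exactly the universal constant divided by $t$; hence equality is attained in the limit and the constant cannot be lowered.

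The main difficulty I anticipate is that, contrary to the porous medium case, for the FDE the supremum of $\abs{\grad v}^2$ is typically not attained at a finite point but approached at spatial infinity: the fat tails $n\sim\abs{x}^{2/\gamma}$ force $v\sim\abs{x}$ and $\abs{\grad v}^2$ to be bounded away from $0$ there. To make the maximum-principle step rigorous one therefore has to either (i) localize, subtracting from $\abs{\grad v}^2$ a small penalization that grows at infinity, running the interior argument, and then removing it; or (ii) analyze the behavior at infinity directly — using the tail bounds \eqref{cond: tail behavior} one shows that the coefficient $\phi(t)$ of $\abs{x}^{2/\gamma}$ in the tail of $n$ evolves so that $\phi(t)^\gamma=\lim_{\abs{x}\to\infty}\abs{\grad v}^2$ solves the \emph{same} autonomous ODE as $u(t)$, so the contribution from infinity obeys the same bound with the same constant. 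Finally, for a general $n_0\in L^1(\R^d)$ one first invokes the instantaneous regularization already obtained (which, for any $\tau>0$, places the solution in the well-behaved class with the known tail behavior) and then lets $\tau\downarrow0$, the bound being uniform in $\tau$.
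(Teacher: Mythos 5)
Your fallback route is exactly the paper's proof, while your primary route is a genuine (and essentially sound) variant; let me compare them and flag one soft spot. The paper never works at the endpoint $b=-1$: it runs the trivial-potential estimate for $n_0\in L^1\cap L^\infty$ and $b<-1$, keeps the explicit constant $C=\frac{1}{2|c_0|\,|\bar p|^{-b-1}}$ coming from \eqref{eq: ode 3} (with $\bar p=\|n_0\|_\infty^\gamma$), and lets $b\to-1$, where $2|c_0|\to 1/(2\alpha)$ and $|\bar p|^{-b-1}\to 1$, so the limiting bound is universal; the $L^\infty$ hypothesis is then removed by approximation, and optimality is checked on the Barenblatt pressure exactly as you do, the supremum of $|\nabla\curlyP|^2/|\curlyP|$ being $2\alpha t^{-1}$, approached as $|x|\to\infty$. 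Your direct Bernstein computation on $v=\sqrt{|p|}=n^{\gamma/2}$ is correct in substance and you identify the right structural reason for universality: at $b=-1$ the weight cancels, $|p|^{b-1}|\nabla p|^4=\bigl(|p|^{b}|\nabla p|^2\bigr)^2$, so the ODE closes without Lemma~\ref{lemma: bound on n(t)}. Carried out, your scheme gives $u'(t)\le -\tfrac{2}{\alpha}\,u(t)^2$ for $u=\max_x|\nabla v|^2$, hence $u(t)\le \alpha/(2t)$; note that with the literal normalization $|\nabla\sqrt{p}|^2=\tfrac14|p|^{-1}|\nabla p|^2$ the constant $2\alpha$ in the statement (and in the paper's Barenblatt check) refers to the quantity $\max_x|p|^{-1}|\nabla p|^2$, so "identifying $c$" should land there.

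The soft spot is your opening claim that Assumption~\ref{assum: coeff V=0} holds at $b=-1$ so that "the argument of Proposition~\ref{prop: main} applies at the boundary of the admissible range". The strict inequality $b<-1$ in that assumption is not there because of the coefficient $c_0$ (you are right that the inequalities in (ii) are satisfied at $b=-1$ when $|\gamma|<2/d$); it is there because the attainment of the maximum fails at the endpoint: the tail estimate only yields $|p|^{b}|\nabla p|^2\lesssim (1+|x|^2)^{b+1}$, which does not vanish at infinity when $b=-1$, and on the Barenblatt the supremum is indeed reached only as $|x|\to\infty$. You do anticipate this, but of your two remedies the second is not sound as stated: \eqref{cond: tail behavior} provides two-sided bounds with time-dependent constants, not an asymptotic coefficient $\phi(t)$, so the limit of $|\nabla v|^2$ at infinity need not exist, let alone obey the same autonomous ODE. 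The penalization idea could presumably be made rigorous, but the cleanest completion is precisely the alternative you mention in passing and the paper adopts: prove the estimate for $b<-1$, where the maximum is attained and the ODE argument is already justified, and pass to the limit $b\downarrow-1$ only in the final inequality, whose constant depends continuously on $b$. Your last step, reducing $n_0\in L^1$ to the bounded case via the $L^\infty$-regularization at time $\tau$ and letting $\tau\downarrow 0$, is fine and equivalent to the paper's approximation argument.
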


\begin{corollary}[Lipschitz control of $n$] \label{cor: lip n}
 Let $V=0$, $0<\gamma\leq 4/(d+3)$ and $n$ be the solution of equation~\eqref{eq: density} with initial data $n_0\in L^1(\R^d)$. Then, the quantity 
 $$ \max_x |\nabla n(t,x)|^2$$
 decreases in time. Moreover, if the inequality $\gamma< 4/(d+3)$ is strict, then 
 there exists a positive constant $C$ such that
    \begin{equation*}
        \max_x |\nabla n(t,x)|^2 \leq C t^{-1-\alpha d(2-\gamma)},
    \end{equation*}
where $\alpha=1/(d\gamma+2)$. 
\end{corollary}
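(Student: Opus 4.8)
The plan is to deduce Corollary~\ref{cor: lip n} from Theorem~\ref{thm: V=0} for a suitably chosen exponent $b$. Since $\gamma>0$ gives $p=n^\gamma\ge 0$, we have $\nabla n=\gamma^{-1}p^{1/\gamma-1}\nabla p$, hence
\[
|\nabla n|^2=\frac{1}{\gamma^2}\,|p|^b\,|\nabla p|^2,\qquad b:=\frac{2}{\gamma}-2.
\]
For $d\ge2$ and $0<\gamma\le 4/(d+3)$ one has $\gamma<1$, so $b>0$ and $\gamma b=2-2\gamma$; thus controlling $\max_x|\nabla n(t)|^2$ amounts, up to the constant $\gamma^{-2}$, to controlling $\max_x|p(t)|^b|\nabla p(t)|^2$, which is the quantity estimated in Theorem~\ref{thm: V=0}.

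Next I would check that this $b$ fulfils Assumption~\ref{assum: coeff V=0} whenever $\gamma<4/(d+3)$. Condition \textit{(i)}, $\gamma<1/\sqrt{d-1}$, follows from the elementary inequality $4/(d+3)\le 1/\sqrt{d-1}$, which is equivalent to $16(d-1)\le(d+3)^2$, i.e.\ to $(d-5)^2\ge0$; in particular $\gamma^2(d-1)\le1$ on the whole closed range, so the square root in \textit{(ii)} is well defined. For condition \textit{(ii)}, $\gamma b=2-2\gamma$ lies strictly between the roots $1\pm\sqrt{1-\gamma^2(d-1)}$ of $x\mapsto x^2-2x+\gamma^2(d-1)$ exactly when this quadratic is negative at $x=2-2\gamma$, and a direct computation gives $(2-2\gamma)^2-2(2-2\gamma)+\gamma^2(d-1)=\gamma\big(\gamma(d+3)-4\big)$, which is $<0$ iff $\gamma<4/(d+3)$ (and $=0$ at the endpoint). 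Theorem~\ref{thm: V=0} then gives $\max_x|p(t)|^b|\nabla p(t)|^2\le C\,t^{-1-\gamma d(b+1)\alpha}$; since $b+1=2/\gamma-1$, we have $\gamma d(b+1)\alpha=\alpha d(2-\gamma)$, and multiplying by $\gamma^{-2}$ yields $\max_x|\nabla n(t)|^2\le C\,t^{-1-\alpha d(2-\gamma)}$, the quantitative part of the statement.

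It remains to get the monotonicity of $t\mapsto\max_x|\nabla n(t)|^2$ on the full closed range $0<\gamma\le 4/(d+3)$, in particular at the endpoint $\gamma=4/(d+3)$, where by the computation above $\gamma b=2-2\gamma$ lands exactly on the boundary of the interval in Assumption~\ref{assum: coeff V=0}\textit{(ii)} (the left endpoint for $d\le5$, the right one for $d\ge6$, the interval collapsing to $\{1\}$ when $d=5$). For this I would revisit the Bernstein-type differential inequality satisfied by $u(t):=\max_x|p|^b|\nabla p|^2$ in the proof of Proposition~\ref{prop: main} with $V=0$: at that boundary the coefficient produced by the quadratic $x^2-2x+\gamma^2(d-1)$ vanishes, so the inequality degenerates to $\tfrac{d}{dt}u(t)\le0$, which is the claimed monotonicity; alternatively, it can be quoted from the preservation of $\|\nabla n\|_\infty$ for $0\le\gamma\le 4/(d+3)$ established in \cite{CJM15}. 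The one point requiring care --- and the only real obstacle I foresee --- is verifying that the right-hand side of that differential inequality stays $\le0$ in this degenerate borderline regime, and not merely under the strict inequalities of Assumption~\ref{assum: coeff V=0} used to produce the decay rate; the rest is the algebraic bookkeeping above together with an appeal to Theorem~\ref{thm: V=0}.
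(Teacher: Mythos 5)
Your proposal is correct and follows essentially the same route as the paper: apply Theorem~\ref{thm: V=0} with $\gamma b=2(1-\gamma)$ (i.e.\ $b=2/\gamma-2$, so that $|\nabla n|^2=\gamma^{-2}|p|^b|\nabla p|^2$), check that Assumption~\ref{assum: coeff V=0} holds exactly when $\gamma<4/(d+3)$, and treat the closed range through the sign of $c_0$. Your explicit handling of the endpoint $\gamma=4/(d+3)$, where the quadratic vanishes so that $c_0=\frac{\gamma}{4}(d+3)-1=0$ and the Bernstein inequality degenerates to $u'\le 0$, is precisely the mechanism the paper invokes, just spelled out more carefully.
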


\noindent
\textbf{Long-time convergence to the Barenblatt profile.}
As already mentioned, through the change of variables \eqref{eq: change of v}, results for the solution to the drift-less equation can be translated into results for the Fokker-Planck equation and viceversa. However, let us remark that the results stated above, namely Theorem~\ref{thm: V=0} and Theorem~\ref{thm: attr V}, actually provide two different information, since the quantity for which we find decay rates is not the same for the two equations. Therefore, the latter result gives additional insights into the asymptotics of the solution to the standard equation, in particular, it provides convergence results of its gradient to the gradient of the self-similar profile.
Applying Theorem~\ref{thm: attr V} to the rescaled solution allows us to state the following result.
\begin{thm}[Weighted convergence to the Barenblatt gradient]\label{thm: convergence grad}
Let $n(t,x)$ be the solution of equation~\eqref{eq: pme}. If $C_0:=\sup_x |p_0|^b |\nabla p_0 + x|^2< \infty$, under assumptions \eqref{assum: coeff attr V} we have
\begin{equation}\label{eq: thm rate grad}
 \max_x  |p(t,x)|^{b}\left|\nabla p(t,x)  + xt^{-1}\right|^2 \leq C_0 t^{\beta} t^{-\frac{C}{d\gamma +2}}, \text{ for all } t>0,
\end{equation}
with  $C=1-\gamma bd/2$ and  
\begin{equation*}
\beta:=-\alpha\gamma d b -2 + 2\alpha.
\end{equation*}
\end{thm}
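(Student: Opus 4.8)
The plan is to obtain \eqref{eq: thm rate grad} directly from Theorem~\ref{thm: attr V} through the time-dependent scaling \eqref{eq: change of v}. Given a solution $n$ of \eqref{eq: pme}, the rescaled function $\hat n$ defined in \eqref{eq: change of v} solves the nonlinear Fokker--Planck equation \eqref{eq hat n}, i.e. equation~\eqref{eq: density} with $V=|x|^2/2$; inverting the change of variables and recalling $\alpha=1/(d\gamma+2)$, one gets the pointwise relations
\begin{equation*}
    n(t,x)=t^{-\alpha d}\,\hat n(s,\xi),\qquad p(t,x)=t^{-\alpha d\gamma}\,\hat p(s,\xi),\qquad s:=\alpha\ln t,\quad \xi:=t^{-\alpha}x,
\end{equation*}
with $\hat p=P(\hat n)$. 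The hypothesis $C_0=\sup_x |p_0|^b |\grad p_0+x|^2<\infty$ is exactly the initial-data hypothesis of Theorem~\ref{thm: attr V} for $\hat n$---the weight $\grad p_0 + x$ being $\grad\hat q_0$ with $\hat q:=\hat p+|x|^2/2$ the natural potential-shifted pressure of the quadratic problem---and assumptions~\eqref{assum: coeff attr V} are in force, so Theorem~\ref{thm: attr V} applies to $\hat n$.

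Next I would differentiate the pressure relation in $x$, obtaining $\grad_x p(t,x)=t^{-\alpha d\gamma-\alpha}(\grad\hat p)(s,\xi)$, and use the elementary identity $-\alpha d\gamma-\alpha=\alpha-1$, which also yields $xt^{-1}=\xi\,t^{\alpha-1}=\xi\,t^{-\alpha d\gamma-\alpha}$. Thus both terms carry the same power of $t$ and
\begin{equation*}
    \grad p(t,x)+xt^{-1}=t^{-\alpha d\gamma-\alpha}\bigl[(\grad\hat p)(s,\xi)+\xi\bigr]=t^{-\alpha d\gamma-\alpha}\,(\grad\hat q)(s,\xi).
\end{equation*}
Combining this with $|p(t,x)|^b=t^{-\alpha d\gamma b}\,|\hat p(s,\xi)|^b$ and taking the maximum over $x$ (equivalently over $\xi$) gives
\begin{equation*}
    \max_x |p(t,x)|^{b}\,|\grad p(t,x)+xt^{-1}|^2 = t^{-\alpha d\gamma b-2\alpha d\gamma-2\alpha}\,\max_\xi |\hat p(s,\xi)|^{b}\,|\grad\hat q(s,\xi)|^2 .
\end{equation*}

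I would then invoke Theorem~\ref{thm: attr V} for $\hat n$: the right-hand maximum is bounded by $C_0\,e^{-Cs}$ with $C=1-\gamma bd/2>0$, and since $s=\alpha\ln t$ this equals $C_0\,t^{-C\alpha}=C_0\,t^{-C/(d\gamma+2)}$. It remains to simplify the prefactor: with $\alpha=1/(d\gamma+2)$ one has $2\alpha d\gamma+2\alpha=2\alpha(d\gamma+1)=2(1-\alpha)$, whence $-\alpha d\gamma b-2\alpha d\gamma-2\alpha=-\alpha\gamma d b-2+2\alpha=\beta$. Multiplying the two factors reproduces \eqref{eq: thm rate grad}.

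The only point needing real care---the rest being the exponent bookkeeping above---is the behavior as $t\to0^+$ and the identification of initial data under \eqref{eq: change of v}: according to the normalization of $\psi$ in \eqref{eq: change of v}, the time $s=0$ of $\hat n$ may correspond to a positive $n$-time rather than to $t=0$, so that one applies Theorem~\ref{thm: attr V} from a positive time onward. This is dealt with exactly as in the remark following Theorem~\ref{thm: attr V}: for any $\tau>0$, the instantaneous regularization provided by Theorem~\ref{thm: V=0} ensures the finiteness of the corresponding constant at time $\tau$, one runs the exponential decay from $\tau$ on, and lets $\tau\to0^+$ (alternatively, one normalizes $\psi$ so that $\hat n(0,\cdot)=n_0$). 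The differentiations performed on $\grad p$ are justified by the regularity of well-behaved solutions in Definition~\ref{definition} (respectively by the approximation argument already carried out for \eqref{eq: pme} in the proof of Theorem~\ref{thm: V=0}), which makes $p$ locally Lipschitz in space.
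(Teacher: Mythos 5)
Your proposal is correct and takes essentially the same route as the paper: apply Theorem~\ref{thm: attr V} to the rescaled solution $\hat n$ from \eqref{eq: change of v} and invert the change of variables, with the same exponent bookkeeping yielding $\beta$ and $t^{-C/(d\gamma+2)}$. Your extra attention to the time normalization of $\psi$ (rescaled time $0$ not corresponding to $t=0$) addresses a point the paper itself glosses over, but does not change the substance of the argument.
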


The above inequality provides a convergence result of the pressure gradient towards the gradient of the source solution, although weighted by $|p|^b$. Let us recall that the pressure of the Barenblatt profile is 
\begin{equation}
    \label{Barenblatt pressure}
    \curlyP(t,x):= t^{-\alpha\gamma d}\prt*{C - \sign(\gamma)\alpha \frac{|x|^2}{2} t^{-2\alpha}}_+.
\end{equation}  
and thus its gradient is proportional to $-x t^{-1}$. Moreover, the exponent $\beta$ is exactly the sharp exponent of Theorem~\ref{thm: V=0}, which means $|\curlyP|^b|\nabla \curlyP|^2 \simeq t^\beta$ in cylinders of the form $\{(t,x); \ k|x|=t^\alpha\}$.
Therefore, in the fast diffusion case, assuming to be in a regime in which the solution $n(t,x)$ can actually be approximated by a Barenblatt profile, we may remove the weight from \eqref{eq: thm rate grad} and infer a new convergence result of the pressure gradient.

\smallskip
\noindent
\textbf{Structure of the paper.}
In Section~\ref{sec: proof main}, we study the decay of the Lipschitz-like norm defined in \eqref{quantity} and prove Proposition~\ref{prop: main}. Then, using an approximation argument we prove Theorems~\ref{thm: V=0} and \ref{thm: attr V}, namely the results for any $L^1$-bounded initial data in the drift-less and quadratic potential cases. Section~\ref{sec: asymptotic results} deals with the proof of Theorem~\ref{thm: convergence grad} from which we infer the long-time asymptotic behavior of the solution to the fast diffusion equation. Finally, in Section~\ref{sec: bounded} we show that the same asymptotic results obtained in the whole space also hold for the problem set in a convex bounded domain with homogeneous Neumann boundary conditions and also discuss the homogeneous Dirichlet case.

\section{Decay estimates on Lipschitz norms}\label{sec: proof main}
First, we show the main formal computation, from which we will deduce the decay results of Proposition~\ref{prop: main} which will be fully justified later on in this section.
Then, we prove Theorems~\ref{thm: V=0} and \ref{thm: attr V} through a regularization argument.

Given $t_0>0$, let us assume that there exists $x_0\in \R^d$ such that 
\begin{equation}
    \label{max}
    \max_{x} e^{\alpha(p(t_0,x))}{|\nabla q(t_0,x)|^2}= e^{\alpha(p(t_0,x_0))}{|\nabla q(t_0,x_0)|^2},
\end{equation}
where $\alpha=\alpha(p)$ is a continuously differentiable function of the pressure, which will be later chosen as $\alpha(p)=b\ln |p|$, $b\in \R$. We denote
\begin{equation*}
   v(t):= e^{\alpha(p(t,x_0))}\frac{ |\nabla q(t,x_0)|^2}{2},
\end{equation*}
We now compute $\left(\frac{d}{dt} v(t,x_0)\right)_{|t=t_0}.$ For the sake of simplicity, from now on we omit the dependency upon $t_0$ and $x_0$, and we write $\alpha,\alpha'$, and $\alpha''$ for $\alpha(p),\alpha'(p)$, and $\alpha''(p)$.
Since $x_0$ is a maximum point, the following conditions hold
\begin{align*}
    \nabla \left(e^{\alpha(p)}\frac{|\nabla q|^2}{2}\right) =0, \qquad
    \Delta \left(e^{\alpha(p)}\frac{|\nabla q|^2}{2}\right)\leq 0.
\end{align*}
The first condition gives
\begin{align}\label{D2q}
    D^2 q \nabla q = -\frac{\alpha'}{2} |\nabla q|^2 \nabla p,
\end{align}
while from the second we infer
\begin{align*}
\Delta &\left(e^{\alpha}\frac{|\nabla q|^2}{2}\right)\\[0.5em]
  \qquad&= e^\alpha \left( \frac{\alpha''+|\alpha'|^2}{2} |\nabla q|^2 |\nabla p|^2 +2 \alpha' \nabla p D^2 q \nabla q + \alpha' \frac{|\nabla q|^2}{2}\Delta p + \Delta \left(\frac{|\nabla q|^2}{2}\right) \right)\\[0.5em]
 \qquad& \leq 0.
\end{align*}
Using \eqref{D2q}, we find
\begin{equation}\label{deltaleq}
  \Delta \left(\frac{|\nabla q|^2}{2}\right)\leq \frac{|\nabla q|^2|\nabla p|^2}{2} \left(|\alpha'|^2 - \alpha''\right) - \alpha' \frac{|\nabla q|^2}{2}\Delta p.
\end{equation}
We compute the time derivative using equation \eqref{eq: p}
\begin{align*}
    \partialt{}\left( e^\alpha \frac{|\nabla q|^2}{2}\right) &= e^\alpha \left(\alpha' \frac{|\nabla q|^2}{2} \left(\gamma p\Delta q + \nabla p \cdot\nabla q\right)+\nabla q \cdot \nabla \left(\gamma p \Delta q + \nabla p \cdot\nabla q\right)
    \right)\\[0.6em]
    &=e^\alpha \left(\gamma p \frac{\alpha'}{2}|\nabla q|^2 \Delta q + \frac{\alpha'}{2}|\nabla q|^2 \nabla p \cdot \nabla q + \gamma \nabla q \cdot \nabla p \Delta q \right.\\[0.2em]
    & \left.\qquad\qquad + \gamma p \nabla q \cdot \nabla \Delta q  + \nabla q D^2 p \nabla q + \nabla q D^2 q \nabla p
    \frac{}{}\right).
\end{align*}
Using \eqref{D2q} and the equality $2\nabla q \cdot \nabla (\Delta q)= \Delta(|\nabla q|^2)-2 |D^2 q|^2$, where $|D^2 q|^2:= \sum_{i,j=1}^d (\partial_{i,j} q)^2$, we obtain
\begin{align*}
    \partialt{}&\left( e^\alpha \frac{|\nabla q|^2}{2}\right) \\[0.6em]
    &=e^\alpha \left(\gamma p \frac{\alpha'}{2}|\nabla q|^2 \Delta q + \frac{\alpha'}{2}|\nabla q|^2 \nabla p \cdot \nabla q + \gamma \nabla q \cdot \nabla p \Delta q +\gamma p \Delta \prt*{\frac{|\nabla q|^2}{2}} \right.\\[0.2em]
    & \left.\qquad\qquad - \gamma p |D^2 q|^2 - \frac{\alpha'}{2}|\nabla q|^2 \nabla q \cdot \nabla p - \nabla q D^2 V  \nabla q - \frac{\alpha'}{2}|\nabla q|^2 |\nabla p|^2
    \frac{}{}\right)\\[0.6em]
    &\leq e^\alpha \left(\gamma p \frac{\alpha'}{2}|\nabla q|^2 \Delta q + \gamma \nabla q \cdot\nabla p \Delta q + \gamma p \frac{|\nabla q|^2|\nabla p|^2}{2}(|\alpha'|^2 - \alpha'') \right.\\[0.2em]
    & \left.\qquad\qquad  - \gamma p \frac{\alpha'}{2}|\nabla q|^2\Delta p - \gamma p |D^2 q|^2 - \nabla q D^2V \nabla q - \frac{\alpha'}{2}|\nabla q|^2 |\nabla p|^2
    \right)\\[0.6em]
    &= e^\alpha \left(\gamma p \frac{\alpha'}{2}|\nabla q|^2 \Delta V + \gamma p \frac{|\nabla q|^2|\nabla p|^2}{2}(|\alpha'|^2 - \alpha'') \right.\\[0.2em]
    & \left.\qquad\qquad + \underbrace{\gamma \nabla q \cdot\nabla p \Delta q  - \gamma p |D^2 q|^2}_{\mathcal{A}} - \nabla q D^2V \nabla q - \frac{\alpha'}{2}|\nabla q|^2 |\nabla p|^2
    \right),
\end{align*}
where in the above inequality we used \eqref{deltaleq} and the fact that $\gamma p >0$.

Let us now treat the term $\mathcal{A}$. We choose some coordinates where the first vector of the basis is oriented as $\nabla q/ |\nabla q|$. With this choice,  
we notice that from \eqref{D2q} we know that the value of the element $(D^2 q)_{1,1}$ of the Hessian matrix  is 
$$(D^2 q)_{1,1}= \prt*{\frac{\nabla q}{|\nabla q|}}^{\!T} D^2 q \prt*{\frac{\nabla q}{|\nabla q|}}= -\frac{\alpha'}{2} \nabla q \cdot \nabla p.$$ 
Let us denote 
$$\lambda:=-\frac{\alpha'}{2}
\nabla q \cdot \nabla p, \quad \delta_{i}=(D^2q)_{i,i} \, \text{ for } i=2, \dots, d, \quad \delta := \sum_{i=2}^{d} \delta_i.$$
Since $\gamma p >0$, we have
\begin{align*}
    \mathcal{A} = \gamma \nabla q \cdot\nabla p \Delta q - \gamma p |D^2 q|^2 &\leq \gamma \nabla q \cdot\nabla p (\lambda + \delta) - \gamma p \prt*{\lambda^2 + \sum_{i=2}^d \delta_i^2}\\[0.5em]
    &\leq \gamma \nabla q \cdot\nabla p (\lambda + \delta) - \gamma p \prt*{\lambda^2 + \frac{\delta^2}{d-1}}\\[0.5em]
     &= - \gamma\frac{\alpha'}{2} |\nabla q \cdot\nabla p|^2 + \gamma\delta \nabla q \cdot\nabla p \\[0.2em] 
     &\qquad- \gamma p \frac{|\alpha'|^2}{4}|\nabla q \cdot\nabla p|^2-\gamma p \frac{\delta^2}{d-1}.
\end{align*}
Using Young's inequality, we have
\begin{equation*}
     \gamma\delta  \nabla q \cdot\nabla p \leq   \gamma p \frac{\delta^2}{d-1} + \gamma\frac{|\nabla q \cdot\nabla p|^2}{4p}(d-1) ,
\end{equation*}
and we finally find
\begin{align*}
    \mathcal{A} \leq \prt*{-\gamma \frac{\alpha'}{2} + \frac{\gamma(d-1)}{4p} - \gamma p \frac{|\alpha'|^2}{4}}|\nabla q \cdot\nabla p|^2.
\end{align*}
Coming back to the estimate of the time derivative, and using the above inequality, we obtain
\begin{equation}\label{final1} 
\begin{aligned}
\partialt{}\prt*{e^\alpha\frac{|\nabla q|^2}{2}} \leq e^\alpha &\left(|\nabla q|^2|\nabla p|^2 \prt*{-\frac{\alpha'}{2}+\gamma p \frac{|\alpha'|^2}{2} - \gamma p \frac{\alpha''}{2}} \right.\\[0.6em]
& \qquad+ |\nabla q \cdot\nabla p|^2 \prt*{- \gamma \frac{\alpha'}{2}+\frac{\gamma(d-1)}{4p} -\gamma p \frac{|\alpha'|^2}{4} }\\[0.6em]
&\qquad\left.+\gamma p \frac{\alpha'}{2}\Delta V |\nabla q|^2 - \nabla q D^2V \nabla q
\right).
    \end{aligned} 
\end{equation}
Now let us take $\alpha(p)= b \ln |p|$, with $b\in \R$ satisfying the assumptions of Proposition~\ref{prop: main}. In particular, $b \gamma>0$. Substituting in \eqref{final1} we get
\begin{equation}\label{final2} 
\begin{aligned}
\partialt{}\prt*{|p|^b\frac{|\nabla q|^2}{2}} \leq |p|^b &\left(|\nabla q|^2|\nabla p|^2 \prt*{-\frac{b}{2p}+\frac{\gamma b^2}{2p}  +\frac{\gamma b}{2p}}\right.\\[0.6em]
&\qquad\left. + |\nabla q \cdot\nabla p|^2 \prt*{- \frac{\gamma b}{2 p}+\frac{\gamma(d-1)}{4p} -\frac{\gamma b^2}{4p}}\right.\\[0.6em]
&\qquad\left.+\frac{\gamma b}{2}\Delta V |\nabla q|^2 - \nabla q D^2V \nabla q
\right).
    \end{aligned} 
\end{equation}
We denote
\begin{align*}
    {c_1}&=-\frac{|b|}{2}+\frac{|\gamma| b^2}{2} +\frac{\gamma |b|}{2}, \\
    {c_2}&=- \frac{\gamma |b|}{2}+\frac{|\gamma|(d-1)}{4} -\frac{|\gamma| b^2}{4},
    \end{align*}
    and
\begin{equation}\label{defic_0}
    c_0:=c_1+c_2 = -\frac{|b|}{2}+\frac{|\gamma| b^2}{4} +\frac{|\gamma|(d-1)}{4}.
\end{equation}
We rewrite \eqref{final2} as follows
\begin{equation}\label{final3}
\begin{aligned}
\partialt{}\prt*{|p|^b\frac{|\nabla q|^2}{2}}&\leq |p|^{b-1}\left(c_1 |\nabla q|^2|\nabla p|^2 + c_2|\nabla q \cdot\nabla p|^2\right)\\[0.5em]
&\quad+|p|^b\left(\frac{\gamma b}{2}\Delta V |\nabla q|^2 - \nabla q D^2V \nabla q\right).
\end{aligned} 
\end{equation}
Note that, because of $|\nabla q \cdot\nabla p|\leq |\nabla q||\nabla p|$, whenever we have $c_1\leq 0$, we can use
\begin{equation}\label{c1c2c0}
c_1 |\nabla q|^2|\nabla p|^2 + c_2|\nabla q \cdot\nabla p|^2\leq  c_0|\nabla q \cdot\nabla p|^2.
\end{equation}
Now we treat each different choice of potential separately.

\subsection{Generic potential with bounded derivatives} 
Let $V$ satisfy assumption~\eqref{assum: bdd V}, and let $n\in L^\infty((0,\infty)\times\R^d)$. By assumption \eqref{assum: coeff bdd V} - points \textit{(ii)} and \textit{(iii)} - we have $c_0<0, c_1\leq 0$, so that we can use \eqref{c1c2c0}. 

 Since $p=q-V$, from equation \eqref{final3} we find
\begin{equation*}
\begin{aligned}
\partialt{}\prt*{|p|^b\frac{|\nabla q|^2}{2}} &\leq c_0 |p|^{b-1} |\nabla q \cdot\nabla p|^2 +|p|^b\left(\frac{\gamma b}{2}\Delta V |\nabla q|^2 - \nabla q D^2V \nabla q\right)\\[0.6em]
&\leq c_0 |p|^{b-1}|\nabla q|^4 + c_0 |p|^{b-1} |\nabla q \cdot \nabla V|^2 - 2 c_0 |p|^{b-1}|\nabla q|^2 \nabla q \cdot \nabla V \\[0.2em]
&\qquad\quad +\frac{\gamma b}{2}|p|^b|\nabla q|^2 ||\Delta V||_\infty +|p|^b||D^2 V||_\infty|\nabla q|^2\\[0.6em]
&\leq c_0 |p|^{b-1} |\nabla q|^4 + 2 |c_0| |p|^{b-1}  |\nabla V|_\infty |\nabla q|^3\\[0.2em]
&\qquad\quad +C(\gamma,b,||D^2 V||_\infty)|p|^b|\nabla q|^2.
\end{aligned} 
\end{equation*}
By Young's inequality, we have
\begin{equation*}
    2|c_0||p|^{b-1}|\nabla V|_\infty |\nabla q|^3 \leq \frac 3 4 |c_0||p|^{b-1} |\nabla q|^4 + 4 |c_0||p|^{b-1}|\nabla V|_\infty^4,
\end{equation*}
hence
\begin{equation}\label{Vbddc_0C}
     \partialt{}\prt*{|p|^b\frac{|\nabla q|^2}{2}} \leq \frac{c_0}{4} |p|^{b-1} |\nabla q|^4 + 4|c_0| |p|^{b-1}||\nabla V||_{\infty}^4 + C |p|^b|\nabla q|^2,
\end{equation}
for some $C>0$.

Let us start from the case $\gamma>0$.
By the global boundedness of the density, there exists $\overline{p}>0$ such that $|p|\leq \overline{p}$. By assumption \eqref{assum: coeff bdd V} $b-1>0$, thus we have
\begin{equation*}
\begin{aligned}
    \partialt{}\prt*{|p|^b\frac{|\nabla q|^2}{2}} &\leq \frac{c_0}{4} |p|^{2b} |\overline{p}|^{-b-1}|\nabla q|^4 + 4|c_0||\overline{p}|^{b-1}||\nabla V||_{\infty}^4 + C |p|^b|\nabla q|^2 \\[0.6em]
    &\leq -C_1 |p|^{2b}|\nabla q|^4 + C_2 |p|^b |\nabla q|^2 + C,
    \end{aligned}
\end{equation*}
where $C_1, C_2$ denotes positive constants whose value may from now on change from line to line.
Recalling the definition $v(t):= \frac 12|p(t,x_0)|^b |\nabla q(t,x_0)|^2$, we obtain the differential inequality 
\begin{equation}\label{eq: ode 1}
v'(t_0)\leq - C_1 v^2(t_0) +C_2 v(t_0) + C.
\end{equation}
We now consider the fast diffusion case, $\gamma<0$. From $n \in L^\infty((0,\infty)\times\R^d)$, there exists $\overline{p}<0$ such that $|p(t,x)|\geq |\overline{p}|$ for all $(x,t)\in \R^d\times(0,\infty)$.
We come back to \eqref{Vbddc_0C} and write
\begin{align*} 
\partialt{}&\prt*{|p|^b\frac{|\nabla q|^2}{2}}\\[0.6em]
&\leq c_0|p|^{2b} |\overline{p}|^{-b-1}  |\nabla q|^4 + 2 |c_0| |p|^{3b/2} |\overline{p}|^{-b/2-1} |\nabla V|_\infty |\nabla q|^3   +  C|p|^b|\nabla q|^2, 
\end{align*}
where we used $-b-1\geq0$ and $-b/2 -1\leq 0$, by assumption \eqref{assum: coeff bdd V}. This can be written in the form
$$v'(t_0)\leq - C_1 v^2(t_0) + C v(t_0)^{3/2}+ C_2 v(t_0) + C,$$
and, after applying the Young inequality $v(t_0)^{3/2}\leq \varepsilon v(t_0)^{2}+C(\varepsilon)$ for suitable $\varepsilon>0$ small, we deduce again \eqref{eq: ode 1}.

\subsection{Quadratic potential} 
Under assumption \eqref{assum: attr V} we have $V=|x|^2/2$ and equation \eqref{final3} reads
\begin{equation*}
\begin{aligned}
\partialt{}\prt*{ |p|^b\frac{|\nabla q|^2}{2}} \leq |p|^{b-1} &\left(c_1 |\nabla q|^2|\nabla p|^2 + c_2|\nabla q \cdot\nabla p|^2\right) + c_3 |p|^b |\nabla q|^2,
    \end{aligned} 
\end{equation*}
where 
\begin{align*}
    c_3= \frac{\gamma b d}{2}-1.
\end{align*}
Conditions \textit{(ii)} and \textit{(iii)} of assumption \eqref{assum: coeff attr V} ensure $c_0\leq 0$, and $c_1, c_3 <0$. 
Therefore, we establish
\begin{equation*} 
\begin{aligned}
\partialt{}\prt*{|p|^b\frac{|\nabla q|^2}{2}} &\leq c_0 |p|^b|\nabla q \cdot\nabla p|^2 + c_3 |p|^b |\nabla q|^2 \\[0.6em]
&\leq  c_3 |p|^b |\nabla q|^2,
    \end{aligned} 
\end{equation*}
namely
\begin{equation}
    \label{eq: ode 2}
    v'(t_0) \leq - C v(t_0).
\end{equation}

\begin{remark}
Let us point out that the same conclusion holds for any potential $V$ that satisfies
$$ D^2V - \Delta V \frac{\gamma b}{2} I \geq c I,$$ for some positive constant $c=c(\gamma,b,d)$ where $I\in \R^d\times\R^d$ denotes the identity matrix. 
\end{remark}

\subsection{Trivial potential}\label{subsec: trivial}
If $V=0$, then $q=p$ and equation \eqref{final3} reads
\begin{equation}\label{fpmenoV}
    \partialt{}\left(|p|^b \frac{|\nabla p|^2}{2}\right) \leq c_0 |p|^{b-1} |\nabla p|^4.
\end{equation}
We recall that $c_0$ is defined as in \eqref{defic_0}, and is negative under assumption~\eqref{assum: coeff V=0}. 

Let us first prove the claimed result for $n_0\in L^\infty(\R^d)$. For $\gamma, b>0$, we have $|p(t,x)|^{-b-1}\geq |\overline{p}|^{-b-1}$. For $\gamma<0$,  by assumption $b\leq -1$, and $|p(t,x)|\geq |\overline{p}|$. Therefore, in both cases we find
\begin{align*}
      \partialt{}\left(|p|^b \frac{|\nabla p|^2}{2}\right) \leq c_0 |p|^{2b} |\overline{p}|^{-b-1} |\nabla p|^4,
\end{align*}
from which we find
\begin{equation}
    \label{eq: ode 3}
    v'(t_0)\leq - C v^2(t_0).
\end{equation}

\smallskip 

We now show that we may find a different differential inequality, which actually improves the rate of decay for large times, by using the bounds provided by Lemma~\ref{lemma: bound on n(t)}. For this, we no longer need the uniform boundedness assumption on the initial data.

For $\gamma>0$, we apply \eqref{eq: bound on n(t)} to \eqref{fpmenoV} to obtain
\begin{align*}
    \partialt{} \left(p^b \frac{|\nabla p|^2}{2}\right) &\leq c_0 p^{2b} p^{-(b+1)} |\nabla p|^4\\
    &\leq - C p^{2b} |\nabla p|^4 t_0^{(b+1)\frac{d \gamma}{d \gamma +2}},
\end{align*}
namely 
\begin{equation}
    \label{eq: ode 4}
    v'(t_0)\leq - C v^2(t_0) t_0^{\gamma d(b+1) \alpha}.
\end{equation}
In the fast diffusion case, we apply the lower bound of Lemma~\ref{lemma: bound on n(t)} to \eqref{fpmenoV} to infer
\begin{align*}
    \partialt{} \left(|p|^b \frac{|\nabla p|^2}{2}\right) &\leq c_0 |p|^{2b} |p|^{-(b+1)} |\nabla p|^4\\
    &\leq - C p^{2b} |\nabla p|^4 t_0^{(b+1)\frac{d \gamma}{d \gamma +2}},
\end{align*}
where we used $b+1<0$ by assumption \eqref{assum: coeff V=0}. Finally, we obtain again \eqref{eq: ode 4}.

\subsection{Proof of Proposition~\ref{prop: main}}
We conclude the proof of Proposition~\ref{prop: main} by showing that the differential inequalities \eqref{eq: ode 1},\eqref{eq: ode 2},\eqref{eq: ode 3} and \eqref{eq: ode 4} actually hold for the quantity $u(t)$
\begin{equation*} 
    u(t)= \max_x |p(t,x)|^b \frac{|\nabla q(t,x)|^2}{2}.
 \end{equation*} 
In order to do so, we first need to show that for every $t_0>0$ there exists $x_0$ such that \eqref{max} holds, namely the maximum is attained. Secondly, we will prove that $u=u(t)$ is locally Lipschitz on $(0,\infty)$. Before proving both claims in the next paragraph, let us draw now draw the main conclusions.

Since $u$ is locally Lipschitz it is also almost everywhere differentiable. Let $t_0>0$ be a point in which $u$ is differentiable. Since $u(t)\ge v(t)$ for all $t>0$ and $u(t_0)=v(t_0)$, we have $u'(t_0)=v'(t_0)$, and the differential inequalities proven for $v$ at $t=t_0$ also hold for the function $u$. Now, the definition of the function $v$ involved $x_0$ (and hence $t_0$), but this is no more the case for the function $u$. Hence, $u$ is a locally Lipschitz function on $(0,\infty)$ solving a.e. an ODE, and we can use such an ODE to obtain estimates on $u$. In particular, we may conclude that
\begin{itemize}
    \item for a generic potential $V$: \eqref{eq: ode 1} implies
    \begin{equation*} 
  u(t) \leq \max (C_1,C_2 t^{-1}), \quad \forall t>0,
\end{equation*}
and if in addition we assume $\sup_x |p(0,x)|^b |\nabla p(0,x)+ \nabla V(x)|^2<\infty$, this implies $u(0)<\infty$, hence
\begin{equation*}
  u(t) \leq C, \quad \forall t\geq 0,
\end{equation*}
\item for the quadratic potential: \eqref{eq: ode 2} gives
\begin{equation*}
    u(t)\leq C_0 e^{-C t}, \quad \forall t>0,
\end{equation*}
\item for the trivial potential: assuming the uniform boundedness of the initial data, from \eqref{eq: ode 3} we have
\begin{equation*} 
    u(t)\leq C t^{-1} \quad \forall t>0,
\end{equation*}
while from \eqref{eq: ode 4} we conclude
\begin{equation}\label{rate pme}
    u(t) \leq C t^{-1-\gamma d (b+1)\alpha}.
\end{equation}
\end{itemize}
This last rate of decay is actually sharp in that equality holds for the Barenblatt solution \eqref{Barenblatt density pme}. The pressure of the Barenblatt profile is given by \eqref{Barenblatt pressure} and its gradient is
\begin{equation*}
   \nabla \curlyP(t,x)=- \sign(\gamma)\alpha \frac{x}{t}\mathds{1}_{\{ \alpha |x|^2 \leq 2 C t^{2\alpha}\}}.
\end{equation*}
Let $t_0>0$ and $x \in \{x; C t_0^{2\alpha} \leq \alpha |x|^2 \leq \frac 32 C t_0^{2\alpha}\}$. In this region the solution is positive and $x$ is far from the origin. Therefore, we have
\begin{equation*}
 |x|\simeq t_0^{\alpha}, \qquad |\curlyP|\simeq t_0^{-\gamma\alpha  d}, \qquad |\nabla \curlyP|\simeq t_0^{-1+\alpha},
\end{equation*}
hence
\begin{equation*}
    |\curlyP|^b |\nabla \curlyP|^2 \simeq t_0^{-\gamma \alpha d b-2 +2\alpha},
\end{equation*}
which is in fact the same exponent of equation \eqref{rate pme}.

Finally, we now show that the maximum is attained and that $u(t)$ is locally Lipschitz. The argument is different for the PME and FDE.

\smallskip
\noindent
\textbf{Porous medium equation.}
Since the solution satisfies  Definition~\ref{definition}, the pressure is smooth inside its support, globally Lipschitz, and vanishes on the free boundary. Hence, the quantity $|p(t_0,x)|^b |\nabla p(t_0,x)+ V(x)|^2$ attains its maximum value in a point $x_0\in \R^d$ inside the support. 

It now remains to show that the function $u=u(t)$ is locally Lipschitz in $(0,\infty)$. 
Take an instant $t_0>0$. Unless $n(t_0)$ is a stationary solution to the equation (in which case there is essentially nothing to prove), we have $u(t_0)>0$.
By assumption $n$ is continuous in $(t,x)$ so that a simple semicontinuity argument shows that there exists $\delta>0$ such that
$$u(t)\geq \frac{u(t_0)}{2}, \quad \forall t \in [t_0 -\delta, t_0 + \delta].$$
Let us denote $L:=\sup_{t\in [t_0-\delta, t_0+\delta]}  \mathrm{Lip}(q(t))$.
For $t\in[t_0-\delta, t_0+\delta]$, we can restrict the set on which we take the maximum in the definition of the function $u(t)$. More precisely, we have for $t$ in this interval 
$$u(t) = \max_{x\in H(t)} |p(t,x)|^b \frac{|\nabla q(t,x)|^2}{2},$$
where $H(t):=\{x; \ |p(t,x)|^b\geq \frac{u(t_0)}{4L^2}\}.$ It is now sufficient to prove that there exists a set $E\subset \R^d$, independent of time, such that $H(t)\subset E \subset \{x; \ p(t,x)>0\}$ for all $t\in [t_0-\delta, t_0 +\delta]$.
Let us define 
$$E:=\left\{x; \ p(t_0,x)\geq a\right\}, \quad \text{with } a:= \left(\frac{u(t_0)}{8 L^2}\right)^{1/b}.$$
Then, let us suppose that there exist $t_n \in [t_0-\delta, t_0+\delta]$, and $x_n\in \R^d$ such that $p(t_n, x_n) \geq 2 a$ and $p(t_0, x_n)< a$, namely, that $x_n \in H(t_n)$ and $x_n \notin E$. Since there exists a bounded set $K\subset \R^d$ large enough such that $x_n \in \supp{p(t_n)}\subset K$, we can extract a subsequence $x_{n_k}$ such that $x_{n_k}\to x \in K$, and therefore 
$$p(t_0,x)\geq 2a, \qquad p(t_0,x)<a,$$
which is a contradiction. By a similar argument, one can show that $E \subset \{x; \ p(t,x)>0\}$.
Therefore, we have proven that we have
$$u(t)= \max_E |p(t,x)|^b \frac{|\nabla p(t,x)+\nabla V(x)|^2}{2},$$
where $E$ is independent of time. Since $|p(t,x)|^b |\nabla p(t,x)|^2$ is $C^\infty$ on the positivity set of the pressure, it is, therefore, Lipschitz continuous in $t$ uniformly in $x$. Hence $u(t)$ is Lipschitz continuous on $[t_0-\delta, t_0 + \delta]$, and this concludes the proof.  

\smallskip
\noindent
\textbf{Fast diffusion equation.} 
We now show that there exists $x_0\in \R^d$ in which the quantity $|p(t_0)|^b|\nabla q(t_0) |^2$ attains its maximum. To this end, we show $\lim_{|x|\to\infty}|p|^b|\nabla q|^2=~0$. This tail behavior follows from assumption \eqref{cond: tail behavior} and the fact that we imposed $b>-1$.  
Since $n(t,x)$ satisfies \eqref{cond: tail behavior}, we have
\begin{equation*}
     |\nabla p(t,x)|\leq C(t)(1+|x|), 
\end{equation*}
where the constant $C(t)$ is locally bounded in time. Note moreover that in any of the three cases (trivial potential, quadratic potential, generic potential with bounded derivatives) we also have
$|\nabla V(x)|\leq C(1+|x|)$.
Therefore, we get
$$|p(t,x)|^b |\nabla q(t,x)|^2 \leq C(t)(1+|x|^2)^{b+1},$$
and we obtain the desired result since $b<-1$, which proves that the maximum is attained. 

In order to conclude that $u$ is differentiable almost everywhere and $u'(t_0)=v'(t_0)$, it remains to show that the function $u=u(t)$ is locally Lipschitz in $(0,\infty)$ and exactly as for the porous medium case, we only need to show that the maximum can be localy restricted to a fixed subset independent of time. Again it is enough to assume $u(t_0)>0$, find $\delta>0$ such that $u(t)>u(t_0)/2$ for $t\in [t_0-\delta,t_0+\delta]$, and restrict to a ball $B(0,R)$ such that 
$ |p(t,x)|^b \frac{|\nabla q(t,x)|^2}{2}<\frac{u(t_0)}{2}$ for $t\in [t_0-\delta, t_0+\delta]$ and $x\notin B(0,R)$, which is possible because the above estimates on $p$ and $\nabla p$ are supposed to be uniform in time.

\subsection{Proof of the main results}
To extend the results for $V=0$ and $V=|x|^2/2$ to the class of solutions with $L^1$ initial data, we proceed by approximation and compactness arguments. In order to show that the approximating sequence is compact and that its initial data convergences to $n_0$, we need to show local equicontinuity in time. To this end, it is crucial to infer a bound on $n^{\gamma+1}$.
\begin{lemma}\label{lemma: bound for equic}
    Let $n(t,x)$ be the solution of equation~\eqref{eq: pme}, with $|\gamma|<1$. There exists $r>1$ such that $n^{\gamma+1}\in L^r([0,T]\times\R^d))$ for every $T>0$. 
\end{lemma}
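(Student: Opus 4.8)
The plan is to deduce the bound from the conservation of mass and the $L^\infty$-smoothing estimate of Lemma~\ref{lemma: bound on n(t)}, combined with a one-line $L^1$–$L^\infty$ interpolation. The starting point is that the solution of \eqref{eq: pme} is mass-preserving, $\|n(t)\|_{L^1(\R^d)}=\|n_0\|_{L^1(\R^d)}=:M$ for every $t>0$, and that by Lemma~\ref{lemma: bound on n(t)} it satisfies $\|n(t)\|_{L^\infty(\R^d)}\le C\,t^{-d\alpha}$ with $\alpha=1/(d\gamma+2)>0$; note that this holds for merely $L^1$ data, so no boundedness of $n_0$ is needed. Consequently, for every exponent $q\ge 1$,
\[
\int_{\R^d} n(t,x)^q\,\dx x \;=\;\int_{\R^d} n(t,x)^{q-1}\,n(t,x)\,\dx x\;\le\;\|n(t)\|_{L^\infty(\R^d)}^{\,q-1}\,\|n(t)\|_{L^1(\R^d)}\;\le\; C^{\,q-1}M\, t^{-d\alpha(q-1)} .
\]
Only $n(t)\in L^1(\R^d)\cap L^\infty(\R^d)$ is used here, so the fat tails \eqref{cond: tail behavior} present in the fast diffusion regime play no role in this step.

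The second step is to specialise $q=(\gamma+1)r$ — legitimate because $|\gamma|<1$ forces $\gamma+1>0$ — and integrate in time on $[0,T]$:
\[
\int_0^T\!\!\int_{\R^d} n(t,x)^{(\gamma+1)r}\,\dx x\,\dx t\;\le\;C^{\,(\gamma+1)r-1}M\int_0^T t^{-d\alpha\left((\gamma+1)r-1\right)}\,\dx t .
\]
The right-hand side is finite as soon as $d\alpha\big((\gamma+1)r-1\big)<1$, that is, using $\alpha=1/(d\gamma+2)$,
\[
r\;<\;\frac{d\gamma+d+2}{d(\gamma+1)} .
\]
It then only remains to verify that this window contains an admissible exponent. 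Since $\frac{d\gamma+d+2}{d(\gamma+1)}-1=\frac{2}{d(\gamma+1)}>0$ and $\frac{d\gamma+d+2}{d(\gamma+1)}-\frac{1}{\gamma+1}=\frac{d\gamma+2}{d(\gamma+1)}>0$ (the latter because $\gamma>-2/d$ in the well-posed range), the interval $\big(\max(1,\tfrac{1}{\gamma+1}),\,\tfrac{d\gamma+d+2}{d(\gamma+1)}\big)$ is non-empty; choosing any $r$ in it gives $r>1$ and $q=(\gamma+1)r\ge 1$ (so the interpolation inequality above is legitimate) and makes the double integral finite, which is precisely the claim.

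I do not expect any serious obstacle here: the argument is short. The only point that genuinely requires care is the integrability of $t\mapsto t^{-d\alpha((\gamma+1)r-1)}$ near $t=0$ — this is what caps $r$ from above — together with the (elementary) check that the resulting cap stays strictly above $\max(1,\tfrac1{\gamma+1})$ for every $\gamma$ in the range under consideration, so that some $r>1$ is indeed available. A minor secondary remark is that, thanks to Lemma~\ref{lemma: bound on n(t)}, the whole reasoning needs no $L^\infty$ control on the initial datum, only $n_0\in L^1(\R^d)$ and the corresponding smoothing.
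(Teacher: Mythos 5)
Your proof is correct and follows essentially the same route as the paper: bound $\int n^{(\gamma+1)r}$ by $\|n(t)\|_\infty^{(\gamma+1)r-1}M$ via mass conservation, apply the smoothing estimate of Lemma~\ref{lemma: bound on n(t)}, and integrate in time, arriving at the same admissible window $r<\frac{d\gamma+d+2}{d(\gamma+1)}$. Your extra check that one can also take $(\gamma+1)r\ge 1$ (needed for the interpolation step when $\gamma<0$) is a small point the paper leaves implicit, and is a welcome refinement rather than a departure.
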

\begin{proof}
 For any $T>0$, we have
\begin{align*}
    \int_0^T \int_{\R^d} |n(t)|^{(\gamma+1)r} \dx x \dx t &\leq \int_0^T \|n(t)\|_\infty^{(\gamma+1) r-1} \prt*{\int_{\R^d} n(t) \dx x} \dx t\\[0.5em]
    &= M \int_0^T \|n(t)\|_\infty^{(\gamma+1) r-1}\dx t.
\end{align*}
By Lemma~\ref{lemma: bound on n(t)} we have
$$\|n(t)\|_\infty\leq Ct^{-\frac{d}{d\gamma+2}},$$
which shows that it is enough to choose $r>1$ such that 
$$[(\gamma+1)r-1]\frac{d}{d\gamma+2}<1,$$
\ie $1<r<\frac{\gamma+2/d+1}{\gamma+1}.$ 
\end{proof}
We note that in the case $\gamma<0$ the same result could be obtained in a much simpler way, since we have $n\in L^1$ and $\frac{1}{\gamma+1}>1$.

We have now all the elements to prove the main results.
\begin{proof}[Proof of Theorem~\ref{thm: V=0}] 
If $\gamma>0$, let us take $n_{0,\varepsilon}\in C_c(\R^d)$ such that its support is a ball $B(0,R_\varepsilon)$ and $n_{0,\varepsilon} \rightarrow n_0$ strongly in $L^1(\R^d)$. Moreover, let us assume that the initial pressure $p_{0,\varepsilon}= (n_{0,\varepsilon})^\gamma$ is such that $|\nabla p_{0,\varepsilon}(x)|$ is bounded from below by a positive constant on $\partial B(0,R_\varepsilon)$ and that $n_{0,\varepsilon}$ is strictly positive inside $B(0,R_\varepsilon)$. It is known that the solution $n_{\varepsilon}(t,x)$ of \eqref{eq: density} with initial data $n_{0,\varepsilon}$ satisfies Definition~\ref{definition}. 

If $\gamma<0$, let us take $n_{0,\varepsilon}\in \mathcal{X}\setminus \{0\}$. Then by \cite[Theorem 1.1]{BS22} and  \cite[Theorem 4]{Blanchet2009}, the solution $n_{\varepsilon}(t,x)$ of equation~\eqref{eq: pme} satisfies property \eqref{cond: tail behavior}, and therefore Definition~\ref{definition}.
We conclude that Proposition~\ref{prop: main} holds for $n_{\varepsilon}$. 

Because of the well-known contractivity in $L^1$ of the PME an FDE we deduce that $n_{\varepsilon}(t)$ is Cauchy in $L^1(\R^d)$ for every $t$ and we call $n$ its limit as $\varepsilon\to 0$.

Thanks to Proposition~\ref{prop: main}, given $\tau>0$, $\big|\nabla n_\varepsilon^{\gamma(b/2+1)}(t)\big|$ is uniformly bounded for all $t>\tau$. The control of this quantity implies that the sequence of regularized solutions is equicontinuous in space. Equicontinuity in time follows, for instance, from Lemma~\ref{lemma: bound for equic}, namely the fact that $\partial_t n_\varepsilon$ is uniformly bounded in $L_{\mathrm{loc}}^{r}(0,\infty;W^{-2,r}(\R^d))$. Thus, for all $\tau>0$ we have
$$n_\varepsilon \rightarrow n \text{ in } C_{\mathrm{loc}}((\tau,\infty)\times \R^d),$$
and $n(t,x)$ satisfies equation~\eqref{eq: pme} for all $t>0$.
Moreover, $n(t)\to n_0$ as $t\to 0$ thanks to the uniform bound on $n$ in $W^{1,r}(0,T;W^{-2,r}(\R^d))$. Therefore, $n(t,x)$ is the unique solution of equation~\eqref{eq: pme} with initial data $n_0$.
Since $n \mapsto \max_x |\nabla n^{\gamma(b/2+1)}|^2$ is lower semi-continuous we conclude.
\end{proof}

\begin{proof}[Proof of Theorem~\ref{thm: attr V}] 
We argue by approximation as before. In fact, through the time-scaling \eqref{eq: change of v}, equation~\eqref{eq: density} with $V=|x|^2/2$ is equivalent to equation \eqref{eq: pme}.
The equicontinuity properties still hold since they are not affected by the change of variable, and therefore we have uniform convergence of the sequence. The functional $p \mapsto \max_x |p|^b |\nabla p + x|^2$ is lower semi-continuous, and therefore we conclude.  
\end{proof}

\subsection{Proof of Corollaries~\ref{cor: sqrt} and \ref{cor: lip n}}
We now discuss some interesting implications of our result in the trivial potential case, for $b\to -1$ and $b=2/\gamma -2$.

\begin{proof}[Proof of Corollary~\ref{cor: sqrt}]
    Let us take $n_0\in L^1(\R^d)\cap L^\infty(\R^d)$. From Theorem~\ref{thm: V=0} we have
    \begin{align*}
        \max_x |p(t)|^b |\nabla p(t)|^2 \leq C t^{-1},
    \end{align*}
    for all $b<-1$ that satisfy assumption \eqref{assum: coeff V=0}. From the computation carried out in Section~\ref{subsec: trivial} and in particular from inequality~\eqref{eq: ode 3}
we find 
$$C=\frac{1}{2|c_0||\bar{p}|^{-b-1}},$$
with $c_0$ defined as in \eqref{defic_0} and $\bar p:=||n_0||_{L^\infty}^\gamma$.
Let us now take the limit $b\to -1$. Since in this case we have $2|c_0|\to 1/(2\alpha)$, we obtain
 \begin{align*}
        \max_x |p(t)|^b |\nabla p(t)|^2 \leq 2 \alpha t^{-1}.
    \end{align*}
This estimate has been obtained assuming $n_0\in L^\infty$, but it no longer depends on the $L^\infty$ norm of $n_0$. Hence, by approximation, the result is also true for any given initial data $n_0\in L^1(\R^d)$ (actually the assumption $n_0\in L^1$ can also be removed, and is kept only in order to provide a precise functional meaning to the equation).

Computing the Lipschitz norm of $\sqrt{\curlyP}$, where the profile of the Barenblatt pressure is given by equation~\eqref{Barenblatt pressure} with $\gamma<0$, we find
\begin{equation*}
    \max_x \frac{|\nabla \curlyP(t,x)|^2}{|\curlyP(t,x)|}= 2\alpha t^{-1},
\end{equation*}
which indicates that the constant is sharp.
\end{proof}

\begin{proof}[Proof of Corollary~\ref{cor: lip n}]
The result is a straightforward application of Theorem~\ref{thm: V=0} with $\gamma b=2(1-\gamma)$. Let us point out that for $0<\gamma\le 4/(d+3)$ this choice of $b$ satisfies assumption~\eqref{assum: coeff V=0}. In particular, from \eqref{defic_0} we have
\begin{equation*}
    c_0= \frac{\gamma}{4}(d+3)-1\le 0.
\end{equation*}
  This provides the first part of the claim. Moreover, if the inequality $\gamma< 4/(d+3)$ is strict, we also obtain a decay estimate which provides, from Theorem~\ref{thm: V=0},
  \begin{equation*}
     \max_x |\nabla n(t,x)|^2 \leq C t^{-1-\alpha d (2-\gamma)}.\qedhere
  \end{equation*}    
\end{proof}

\section{Asymptotic behaviour at large times}\label{sec: asymptotic results}
We want to study the asymptotic behavior as $t\to\infty$ of the solution $n(t,x)$ to the drift-less porous medium and fast diffusion equations \eqref{eq: pme}.
In order to do so, we take advantage of the fact that $n(t,x)$ can be seen as the solution of a convective porous medium equation with quadratic potential through the change of variables \eqref{eq: change of v}. 
\begin{proof}[Proof of Theorem \ref{thm: convergence grad}]
Since $\hat n$ defined in \eqref{eq: change of v} satisfies equation \eqref{eq hat n}, by Theorem~\ref{thm: attr V} we have
\begin{equation*}
   \max_x |\hn(t,x)|^{\gamma b} \ \big|\sign(\gamma)\nabla \hn^\gamma(t,x) + x\big|^2 \leq C_0 e^{-C t}.
\end{equation*}
Using the definition of $\hn(t,x)$ we find
\begin{align*}
 &e^{\gamma b d t} \max_x  |n(\psi(t), e^t x)|^{\gamma b} \ \left||\gamma| e^{d(\gamma-1)t}e^{(d+1)t}n^{\gamma-1}(\psi(t), e^t x) \nabla n(\psi(t), e^t x) + x \right|^2 \\[0.8em]
 &\qquad\qquad\qquad\leq C_0 e^{-C t}.
\end{align*}
Let us denote $s:=\psi(t)$ and $y=e^t x$. Substituting in the equation above, we obtain sequentially
\begin{equation*}
\begin{aligned}
s^{\frac{\gamma b d}{d\gamma+2}} \max_y  |n(s,y)|^{\gamma b} \ \left||\gamma| s^{\frac{d\gamma +1}{d\gamma+2}} n^{\gamma-1}(s,y) \nabla n(s,y)   + y s^{-\frac{1}{d\gamma +2}} \right|^2 &\leq C_0 s^{-\frac{C}{d\gamma +2}},\\[0.8em]
s^{\frac{\gamma b d}{d\gamma+2}} \max_y   |n(s,y)|^{\gamma b} \ \left|\sign(\gamma) s^{\frac{d\gamma +1}{d\gamma+2}}\nabla |n(s,y)|^\gamma  + y s^{-\frac{1}{d\gamma +2}} \right|^2 &\leq C_0 s^{-\frac{C}{d\gamma +2}},\\[0.8em]
 s^{\frac{\gamma d (2+b) +2}{d\gamma+2}}  \max_y |n(s,y)|^{\gamma b} \ \left|\sign(\gamma) \nabla n^\gamma(s,y)  + y s^{-1} \right|^2 &\leq C_0 s^{-\frac{C}{d\gamma +2}}.
\end{aligned}
\end{equation*}
Therefore
\begin{equation}
    \label{eq: final rate}
\max_x |n(t,x)|^{\gamma b} \ \left|\sign(\gamma) \nabla n^\gamma(t,x)  + x t^{-1} \right|^2 \leq C_0 t^{\beta}t^{-\frac{C}{d\gamma +2}},
\end{equation}
with $\beta = -\alpha\gamma d b -2 + 2\alpha$, and this concludes the proof.
\end{proof}

\smallskip
\noindent
\textbf{Weighted convergence of the pressure gradient.}
For $-2/d<\gamma<0$, since $n(t,x)$ is always positive and the gradient of the Barenblatt pressure is proportional to $x t^{-1}$, equation \eqref{eq: final rate} gives an interesting insight on the rate of convergence of the gradient of $n^{\gamma}$ for large times. Let us take $n_0\in \mathcal{X}\setminus\{0\}$. We recall that the pressure of the Barenblatt profile of the fast diffusion equation is
\begin{equation*}
    \curlyP(t,x) = t^{-\alpha\gamma d}\prt*{C - \alpha\frac{|x|^2}{2}  t^{-2\alpha}}.
\end{equation*} 
Let us take $x\in C_{a,b}(t):=\{x; \ |x|t^{-\alpha}\in [a,b] \}$ for some constants $0<a<b$. Since by \cite[Theorem 1.1]{BS22} $n$ satisfies \eqref{cond: btween B} if and only if $n_0\in \mathcal{X}\setminus \{0\}$, we have, for $x\in C_{a,b}(t)$, a behavior of the form $n^{\gamma b}(t,x)\gtrsim t^{-\alpha \gamma d b}$. Moreover, in such region we also have $|\nabla \curlyP(t,x)|^2 \simeq t^{-2+2\alpha}$. Let us recall $\beta= -\alpha \gamma d b -2 +2\alpha$. Finally, from \eqref{eq: final rate} we find
\begin{equation*} 
\dfrac{\big\|\nabla p(t)  - \nabla \curlyP(t)\big\|^2_{L^\infty(C_{a,b}(t))}}{\big\|\nabla \curlyP(t)\big\|_{L^\infty(C_{a,b}(t))}^2}\lesssim t^{-\frac{C}{d\gamma +2}}, \quad \forall t>0.
\end{equation*}

\section{Problem posed on a bounded domain}\label{sec: bounded}
We now discuss the same problem posed on bounded domains, both for Neumann and Dirichlet homogeneous boundary conditions. For an overview of the asymptotic behavior of the PME and FDE in bounded domains, we refer the reader to \cite{BF21, BFV18, V04} and references therein.

\subsection{Neumann boundary conditions on a convex domain}
Let us now consider the same equation on a bounded, smooth, and convex domain $\Omega\subset \R^d$
\begin{equation*}
   \left\{\begin{array}{rll} 
   \partialt{n} &\!\!= \nabla \cdot (n \nabla q),  &\text{in } (0,\infty)\times \Omega,\\[0.7em]
   n(0,x)&\!\!=n_0(x), &\text{in } \Omega,\\[0.7em]
   \nabla q \cdot \nu &\!\!=0,  &\text{on } (0,\infty)\times \partial\Omega,\\[0.7em]
   \end{array}\right.
\end{equation*}
where $q= p +V$ and $\nu$ represents the outward normal to $\partial \Omega$. Let us assume that $$\partial_\nu V \ge 0  \text{ on } \partial\Omega.$$ 
The computation performed in Section~\ref{sec: proof main} follows through in the same way. We only need to ensure that the maximum of the quantity $e^{\alpha(p)}{|\nabla q|^2}$ is not attained on $\partial\Omega$.

First of all, we denote $h(x)$ a convex function such that 
$\Omega = \{x; h(x)<0\}$, $\partial\Omega=\{x; h(x)=0\}$. We assume $\nabla h\neq 0$ and $D^2h>0$ (in the sense of positive-definite symmetric matrices) on $\partial\Omega$. We fix an instant $t$ and write $q$ for $q(t,\cdot)$. Therefore, for any curve $\omega:(0,1)\to \partial \Omega$ we have $\nabla q(\omega(s)) \cdot \nabla h(\omega(s))=0.$ We differentiate (in $s$) and obtain
$$\omega'(s) (D^2(q(\omega(s)))  \nabla h (\omega(s)) + D^2 h(\omega(s))\nabla q(\omega(s)))=0.$$
We can choose $\omega'(s)=\nabla q(\omega(s))$ (which is a possible choice, since $\nabla q$ is a tangent vector) to obtain
\begin{equation}\label{eq: contrad}
\nabla q(\omega(s)) D^2(q(\omega(s))) \nabla h (\omega(s)) + \nabla q(\omega(s)) D^2 h(\omega(s))\nabla q(\omega(s)))=0.
\end{equation}
Let us assume that the maximum is attained on the boundary. This implies that there exists a positive constant $\mu$ such that
$$\alpha'(p) {|\nabla q(\omega(s))|^2} \nabla p(\omega(s)) + 2 D^2q(\omega(s)) \nabla q(\omega(s)) = \mu \nu,$$
which, taking the scalar product with $\nu= \nabla h$, gives
\begin{align*}\nabla h(\omega(s)) D^2q(\omega(s)) \nabla q(\omega(s)) >0,
\end{align*}
since $\partial_\nu p = \partial_\nu q - \partial_\nu V \leq 0$ on $\partial \Omega$ and $\alpha'>0$ (we recall that we use $\alpha(p)=b\log|p|$, so that $\alpha'(p)=b/p$ and the choice of the sign of $b$ guarantees that $b$ and $p$ always have the same sign). Thus, from \eqref{eq: contrad} we deduce
\begin{align*} 
\nabla q(\omega(s)) D^2 h(\omega(s))\nabla q(\omega(s))) <0,
\end{align*}
which is a contradiction since $h$ is convex.

\subsection{Dirichlet boundary conditions}
    For the problem set in a bounded domain with homogeneous Dirichlet boundary conditions, the computations of Section~\ref{sec: proof main} could still be performed at a formal level. However, the quantity under investigation, namely
    \begin{equation*}
        u(t)= \max_x |p(t,x)|^b |\nabla p(t,x)|^2 =  \max_x |\nabla n^{\gamma(b/2+1)}(t,x)|^2,
    \end{equation*}
is actually not finite for our range of exponents $b$. Indeed, let us consider the drift-less problem
\begin{equation*}
      \left\{\begin{array}{rll} 
   \partialt{n} &\!\!= \dfrac{|\gamma|}{\gamma+1}\Delta n^{\gamma+1},  &\text{in } (0,\infty)\times \Omega\\[0.7em]
   n(0,x)&\!\!=n_0(x), &\text{in } \Omega,\\[0.7em]
    n &\!\!=0,  &\text{on } (0,\infty)\times \partial\Omega.\\[0.7em]
   \end{array}\right.
\end{equation*}
This problem admits a solution of the form $n(t,x)= a(x) b(t)$, where the functions $a(x), b(t)$ satisfy
\begin{equation*} 
        b'(t)=- \dfrac{|\gamma|}{\gamma+1} b^{\gamma+1}(t), \ \text{ and } \
        a(x)=-\Delta a^{\gamma+1}(x), 
\end{equation*}
for $t>0$ and $x \in \Omega$.
We denote $\Tilde{a}(x):= a^{\gamma+1}(x)$. This function satisfies $-\Delta \Tilde{a}=\Tilde{a}^{1/(\gamma+1)}>0,$ in $\Omega$ and $\Tilde{a}=0$ on $\partial \Omega$. Thus, on the boundary, we have $\partial_\nu \Tilde{a}<0$. Let us now consider the quantity \[|\nabla a^{\gamma(b/2+1)}|=|\nabla \Tilde{a}^{\theta}|,\] 
with $\theta=\gamma(b/2+1)/(\gamma+1)$. Even for the maximum value of $b$ allowed, the one such that we have $\gamma b= 1 +\sqrt{1-\gamma^2(d-1)},$ we obtain $\theta<1$. Therefore the quantity $|\nabla a^{\gamma(b/2+1)}|$ blows up on the boundary, hence $u(t)=+\infty$.

\section*{Acknowledgments}
The authors would like to thank Matteo Bonforte, Philippe Lauren\c{c}ot, Nikita Simonov, and Juan Luis V\'azquez for fruitful discussions during the preparation of this paper.
This project was supported by the LABEX MILYON (ANR-10-LABX-0070) of Université de Lyon, within the program «Investissements d’Avenir» (ANR-11-IDEX-0007) operated by the French National Research Agency (ANR), and by the European Union via the ERC AdG 101054420 EYAWKAJKOS project.
The authors also acknowledge the support of the Lagrange Mathematics and Computation Research Center, which also hosted important preliminary discussions on this topic, via its project on Optimal Transportation.

\bibliographystyle{abbrv}
\bibliography{biblio}

\end{document}